\documentclass{article}
\usepackage[dvipdfmx]{graphicx}

\usepackage{amsmath}

\usepackage{amsthm}
\usepackage{amssymb}
\usepackage{amsbsy}
\usepackage{amsfonts}
\usepackage{mathrsfs}
\usepackage[all]{xy}
\usepackage{amstext}
\usepackage{amscd}
\usepackage{psfrag}
\usepackage{enumerate}
\usepackage{flafter}
\allowdisplaybreaks

\textwidth168mm
\textheight228mm
\topmargin-1.033cm
\setlength{\oddsidemargin}{-4mm}
\setlength{\evensidemargin}{-4mm}
\setlength{\unitlength}{1pt}

\theoremstyle{plain}
\newtheorem{thm}{Theorem}[section]
\newtheorem{prop}[thm]{Proposition}
\newtheorem{cor}[thm]{Corollary}
\newtheorem{lem}[thm]{Lemma}
\theoremstyle{definition}
\newtheorem{exa}[thm]{Example}

\newtheorem{rem}[thm]{Remark}
\newtheorem{defn}[thm]{Definition}

\newtheorem{fact}[thm]{Fact}

\def\dim{\mathop{\mathrm{dim}}\nolimits}

\def\Ker{\mathop{\mathrm{Ker}}\nolimits}

\def\Ext{\mathop{\mathrm{Ext}}\nolimits}

\newcommand{\GG}{{\cal G}}

\newcommand{\AAA}{{\Omega}}

\newcommand{\SL}{{\rm SL}}
\newcommand{\SO}{{\rm SO}}

\newcommand{\Isom}{{\rm Isom}}

\newcommand{\Tor}{{\rm Tor}}
\newcommand{\SU}{{\rm SU}}

\newcommand{\Ss}{{\cal S}}

\newcommand{\ra}{\rightarrow}
\newcommand{\Q}{{\mathbb{Q}}}
\newcommand{\R}{{\mathbb{R}}}
\newcommand{\Z}{{\mathbb{Z}}}
\newcommand{\N}{{\mathbb{N}}}

\newcommand{\C}{{\mathbb{C}}}

\newcommand{\g}{{\mathfrak g}}
\newcommand{\ww}{{\omega}}

\def\Diff{\mathop{\mathrm{Diff}}\nolimits}
\def\Tr{\mathop{\mathrm{Tr}}\nolimits}

\begin{document}
\large
\begin{center}
{\bf\Large  Chern--Simons-type $3$-cocycles and
$\mathbb{Q}/\mathbb{Z}$-torsion in the third homology
of discrete diffeomorphism groups}
\end{center}
\begin{center}{Takefumi Nosaka\footnote{E-mail address: {\tt nosaka@math.titech.ac.jp}
}}\end{center}
\begin{abstract}\baselineskip=12pt \noindent
We prove that the third integral group homology of both the
volume-preserving diffeomorphism group and the strict
contactomorphism group of the standard \(3\)-sphere contains
\((\mathbb Q/\mathbb Z)^2\).  The proof constructs two independent
locally smooth Chern--Simons-type \(3\)-classes from the volume form
and the standard framing of \(S^3\).  A torsion-controlled local
integration method and comparison with the primitive
Cheeger--Chern--Simons class also detect \(\mathbb Q/\mathbb Z\) for
several spherical space forms and symplectic projective manifolds.
\end{abstract}
\begin{center}

\normalsize

\baselineskip=11pt

{\bf Keywords} \\
Diffeomorphism group, locally smooth group cohomology, Chern--Simons cocycle, Gel'fand--Fuks cohomology, Smale conjecture

\end{center}

\begin{center}

\normalsize

\baselineskip=11pt

{\bf MSC 2020 } \\

\ \ \ \ 57M50, 57S05, 58D05, 22E65, 20J06 \ \ \ \ \

\end{center}


\large
\baselineskip=16pt

\tableofcontents
\section{Introduction}\label{sec1}

The cohomology of diffeomorphism groups lies at the crossroads of topology,
geometry, foliation theory, and algebraic $K$-theory.
Degree-two classes are often closely related to central extensions and characteristic
class theory (see, e.g., \cite{Fuks,FSS,Neeb}).
By contrast, degree-three phenomena for geometrically defined diffeomorphism groups
are less well understood, especially when one seeks locally smooth cocycles that can be
evaluated and compared with classical secondary invariants.
A substantial part of the earlier literature concerns rational,
real, or continuous cohomological invariants.  

The aim of this paper is to construct locally smooth
$\R/\Z$-valued group $3$-cocycles of Chern--Simons type on several diffeomorphism groups
arising in low-dimensional and symplectic topology, to detect their nontriviality,
and thereby to exhibit $\Q/\Z$-torsion in third group homology.

Throughout, $M$ denotes a closed oriented smooth $n$-manifold, and $\Diff(M)$ denotes the group
of diffeomorphisms of $M$ endowed with the Whitney topology.
Given a differential form $\lambda$ on $M$, we denote by $\Diff_\lambda(M)\subset \Diff(M)$
the subgroup preserving $\lambda$.
For a (topological) group $G$, we write $H_*^{\rm gr}(G)$ for the integral group homology of the
underlying discrete group; see Section~\ref{sec2991} for conventions.

We state our three main results.

\begin{thm}[{Sphere case}]\label{main1}
Let $M:=S^3$. Let $v_{S^3}$ be the standard volume 3-form. 
Then $H_3^{\rm gr}\bigl(\Diff_{v_{S^3}}(S^3)\bigr)$ contains a subgroup isomorphic to $\Q/\Z \oplus \Q/\Z$.
\end{thm}

\begin{thm}[{Spherical geometry}]\label{main2}
Let \(\Gamma\subset\SU(2)\) be one of the binary dihedral, tetrahedral,
octahedral, or icosahedral subgroups with $|\Gamma|>8$.  
Regard the quotient $M:=\SU(2)/\Gamma$ as a 3-manifold, and let $v$ denote the $3$-form on $M$ induced by the volume 3-form $v_{S^3}$ on $S^3$.

Then $H_3^{\rm gr}\bigl(\Diff_v(M)\bigr)$ contains a subgroup isomorphic to $\Q/\Z$.
\end{thm}
\begin{thm}[{Symplectic case}]\label{main3}
Let $\C P^n$ be equipped with the symplectic $2$-form $\ww_n$ arising from the Fubini--Study metric.
Then the third group homology with integer coefficients of each of the following groups
contains a subgroup isomorphic to $\Q/\Z$:
\[
\Diff_{\ww_1}(\C P^1),\qquad
\Diff_{\ww_1\oplus \ww_1}(\C P^1\times \C P^1),\qquad
\Diff_{\ww_2}(\C P^2).
\]
\end{thm}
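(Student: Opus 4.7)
The plan is to reduce each of the three cases to $\Q/\Z$-detection inside $H_3^{\rm gr}$ of a compact Lie group viewed as discrete, where the Chern--Simons $\R/\Z$-valued $3$-class is the classical tool. First, I would apply the configured-cohomology construction of Chern--Simons type $3$-cocycles developed in the earlier sections to the three pairs $(\C P^1,\ww_1)$, $(\C P^1\times\C P^1,\ww_1\times\ww_1)$, and $(\C P^2,\ww_2)$. For the first two pairs the preserved $2$-form has the degree matching the natural filling integration; for $\C P^2$ one would instead use a geometric filling built from a primitive of $\ww_2\wedge\ww_2$ (or, equivalently, from the K\"ahler data), producing a locally smooth $3$-cocycle $c\in C^3_{\sm}\bigl(\Diff_{\ww}(M);\R/\Z\bigr)$ in each case.

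Next, I would invoke the classical results of Smale and Gromov, which give homotopy equivalences
\[
\Diff_{\ww_1}(\C P^1)\simeq \SO(3),\qquad
\Diff_{\ww_1\times\ww_1}(\C P^1\times\C P^1)\simeq (\SO(3)\times\SO(3))\rtimes\Z/2,\qquad
\Diff_{\ww_2}(\C P^2)\simeq \PSU(3).
\]
Restricting $c$ along the inclusion of a maximal compact Lie subgroup $G$ and pulling back to its discrete core $G^\delta$ should reproduce, up to a universal normalization, the standard $\R/\Z$-valued Chern--Simons $3$-cocycle of $G$; this identification would be verified by comparing the explicit geometric-filling formula for $c$ with Dupont's simplicial model on affine simplices in $G$ acting on $M$. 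Finally, I would invoke the classical fact (due to Cheeger--Simons, Dupont and Karoubi) that on each of $\SO(3)^\delta$ and $\PSU(3)^\delta$ the Chern--Simons $3$-class hits every value in $\Q/\Z\subset\R/\Z$. Combined with the preceding step, this yields an $\R/\Z$-valued homomorphism on $H_3^{\rm gr}\bigl(\Diff_{\ww}(M)\bigr)$ whose image equals $\Q/\Z$, forcing the claimed subgroup; in the $\C P^1\times\C P^1$ case it suffices to restrict to one $\SO(3)$ factor, since the semidirect $\Z/2$-action does not affect the argument.

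The main obstacle will be the identification step: confirming that the restriction of the configured cocycle to the compact Lie subgroup coincides with the classical Chern--Simons cocycle. In practice this reduces to computing the relevant geometric filling over a small affine simplex in $G$ and matching the resulting integral with the primitive Chern--Simons form. A nonzero rational normalization would be harmless, because $\Q/\Z$ is divisible by every nonzero integer, so the whole scheme survives provided the restricted cocycle is not identically zero; I expect this non-vanishing to follow from the non-degeneracy of the preserved $2$-form on the orbit of a generic point in $M$.
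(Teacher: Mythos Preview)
Your high-level strategy matches the paper's: build a locally smooth $\R/\Z$-valued $3$-cocycle on the symplectomorphism group, restrict to a compact Lie subgroup, identify the restriction with a Cheeger--Chern--Simons class, and then detect $\Q/\Z$ via cyclic subgroups $\Z/m$. The transfer reasoning and the observation that a nonzero integer normalization is harmless by divisibility of $\Q/\Z$ are both correct and used in the paper.

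The genuine gap is the construction of the invariant $3$-form. The machinery ``developed in the earlier sections'' produces cocycles by integrating a closed \emph{left-invariant $3$-form on the group} over affine simplices in the group. In the $3$-manifold cases that form was $q_M^*(v)$, the pullback of a volume form under the evaluation map; but for symplectic $M$ this device fails on degree grounds ($M$ is even-dimensional), and neither ``the preserved $2$-form'' nor ``a primitive of $\omega_2\wedge\omega_2$'' gives a left-invariant closed $3$-form on $\Diff_{\omega}(M)$. The paper's actual input in the symplectic case is a new, uniform Lie-algebra construction: the pairing $\langle f,g\rangle_M=\int_M fg\,\omega^n$ on $C^\infty(M)$ is ad-invariant for the Poisson bracket (an elementary Stokes computation), so the Cartan $3$-cocycle
\[
\beta_M(X_f,X_g,X_h)=\tfrac{3}{\pi^3}\int_M f\{g,h\}\,\omega^n
\]
is a closed $\Diff_\omega(M)$-invariant $3$-form on $\mathfrak{ham}(M,\omega)$, hence on the group. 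Its restriction to $\mathfrak{psu}(n+1)$ is checked, by an explicit $S^2$ calculation, to be a nonzero multiple of the standard Cartan form, which is exactly the identification step you flagged as the main obstacle. Without $\beta_M$ (or an equivalent invariant $3$-form) your scheme has no cochain to integrate.

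Two smaller points you should also make explicit. First, the homotopy equivalences (Smale, Gromov, Abreu--McDuff) are used not only to name the compact subgroup but, crucially, to ensure $\pi_2\bigl(\Diff_{\omega,0}(M)\bigr)=0$, which is precisely the hypothesis under which the $U$-small configured complex is exact in degrees $<3$ and the period condition reduces to a check on finitely many $\pi_3$ generators. Second, since $\pi_1\bigl(\Diff_{\omega,0}(M)\bigr)$ is nontrivial (e.g.\ $\Z/(n{+}1)$ for $\C P^n$), the cocycle is first built on the universal cover and then pushed down via the finite-kernel transfer lemma; you should say how you cross this bridge rather than working directly on $\Diff_{\omega}(M)$.
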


\noindent\textbf{Proof strategy.}
The model for the detection argument is the primitive
Cheeger--Chern--Simons class
\[
 [c_P^{\rm CS}]
 \in H^3_{\rm loc,sm}(\SU(2);\mathbb R/\mathbb Z).
\]
For the standard rotation homomorphisms
\(R_m:\mathbb Z/m\to\SU(2)\), one may choose
direct-limit-compatible generators
\(\tau_m\in H_3^{\rm gr}(\mathbb Z/m;\mathbb Z)\) such that
\[
 \left\langle R_m^*[c_P^{\rm CS}],\tau_m\right\rangle
 =
 [1/m].
\]
The compatible cyclic classes therefore define a homomorphism from
\(\mathbb Q/\mathbb Z\) to
\(H_3^{\rm gr}(\SU(2);\mathbb Z)\), and evaluation against
\([c_P^{\rm CS}]\) makes its composite the standard inclusion
\(\mathbb Q/\mathbb Z\subset\mathbb R/\mathbb Z\).
This is the basic finite-cyclic detection mechanism used throughout
the paper.

We realize the same detection mechanism in the diffeomorphism groups
appearing in the main theorems.  Given a closed left-invariant form
\(\widetilde\lambda\) on a Fr\'echet--Lie group, we integrate it over
face-compatible local simplices indexed by \(U\)-small tuples.  A
convex chart at the identity provides these simplices with smooth
dependence on their vertices; see
Section~\ref{sec223344}.

To pass from this local cochain to a genuine locally smooth group
cocycle, we impose a period condition and control the configured
homology below the top degree.  In the spherical cases, the
volume-form fibration and Smale-type homotopy equivalences reduce the
required homotopy information to the corresponding isometry groups
\cite{EM,HKMR,BK}.  In the symplectic cases, we use the classical
model for \(\mathbb{CP}^{1}\) and the known models for
\(\mathbb{CP}^{2}\) and
\(\mathbb{CP}^{1}\times\mathbb{CP}^{1}\) \cite{Gro,AM}.
These facts allow us to extend the local filling
degree by degree to a torsion-twisted partial filling of the full
homogeneous bar complex by smooth singular chains.  Under the
corresponding spherical period condition, this construction produces
a locally smooth group 3-cocycle whose local restriction is
\(|\pi_1(\Diff_v(M))|^3 \mathcal F_\sigma(\widetilde\lambda)\); see
Section~\ref{secDiff0Configured}.

The geometric input consists of explicit invariant \(3\)-forms.  In
the spherical cases we use suitably normalized pullbacks of the
volume form by evaluation maps.  For the symplectic examples, the
\(L^2\)-pairing on normalized Hamiltonians and the Poisson bracket
give the Cartan \(3\)-cocycle \(\beta_M\); see
Definition~\ref{def:betaM}.  For \(S^3\), we also construct the
framing cocycle \(\eta_{\rm fr}\) of
Lemma~\ref{lem:framing-cocycle-S3}.  The proofs use the periods and
the explicit restrictions of these forms to compact
\(\SU(2)\)-actions; no identification with Euler or Pontryagin
transgressions is required.

For the final detection, we pull the resulting classes back along
explicit smooth homomorphisms from \(\SU(2)\).
Lemma~\ref{lem:comparison-CCS}, proved using naturality of
differentiation and its injectivity in degree three for \(\SU(2)\),
identifies these pullbacks with the corresponding integral multiples
of the primitive Cheeger--Chern--Simons class.  Every nonzero
multiple used in the one-dimensional detection gives a homomorphism
from \(\mathbb Q/\mathbb Z\) with finite kernel, and hence an image
isomorphic to \(\mathbb Q/\mathbb Z\).  In the \(S^3\) case, the left
and right cyclic systems, together with two cocycle evaluations,
give an integer \(2\times2\) pairing matrix of nonzero determinant.
This gives a homomorphism from
\((\mathbb Q/\mathbb Z)^2\) with finite kernel and hence detects a
subgroup isomorphic to \((\mathbb Q/\mathbb Z)^2\).
When the full diffeomorphism group is disconnected, we verify that
the component group acts trivially on the detected subgroup and then
apply finite-index homology transfer.

\noindent\textbf{Further results and relation to earlier work.}
Beyond the three main theorems, we construct a real-valued locally
smooth cocycle in the hyperbolic case, whose nontriviality remains
open, and use differentiation to prove a non-extension result for
the two \(S^3\)-classes.  We also discuss an application to Quillen
plus constructions.

Reznikov introduced
\(\mathbb R/\mathbb Z\)-valued Chern--Simons-type secondary classes
in symplectic topology in~\cite{Rez1}; the later work~\cite{Rez2}
constructs their real continuous-cohomology counterparts for groups
of volume-preserving and symplectic diffeomorphisms.  
Our construction yields locally smooth representatives with explicitly
prescribed identity germs and computable restrictions to compatible finite
cyclic subgroups. The global set-theoretic extensions are not canonical,
and we do not claim equality with Reznikov's regulator classes
In contrast to Reznikov's framework, our approach is independent and provides explicit, locally smooth cocycle representatives whose restrictions to compatible finite cyclic subgroups are directly computable.

\noindent\textbf{Organization of the paper.}
Section~\ref{sec299} reviews group cohomology, configured domains,
and Dupont-type fillings.  Section~\ref{sec33233} constructs local
affine fillings, proves the torsion-controlled extension theorem, and
relates the resulting locally smooth cocycles to Lie algebra
cohomology; the proof of the differentiation formula is given in
Appendix~\ref{furoku}.  Section~\ref{sec2233} recalls the primitive
Cheeger--Chern--Simons class and proves the spherical results,
including the detection of a subgroup isomorphic to
\((\mathbb Q/\mathbb Z)^2\) for \(S^3\) and its strict-contact
consequence; it also contains the non-extension result and the
hyperbolic discussion.  Section~\ref{sec32884} constructs the
symplectic invariant forms, proves Theorem~\ref{main3}, and discusses
the application to Quillen plus constructions.

\noindent\textbf{Conventions.}
Throughout, $\GG$ denotes a topological group with identity element $e$.
Unless stated otherwise, finite-dimensional manifolds are connected, closed, oriented, and smooth.

\subsection*{Acknowledgments}
This work was partially supported by JSPS KAKENHI, Grant Number 00646903.
The author is also grateful to Shuhei Maruyama and Sam Nariman for helpful comments.

AI tools were used only for language editing and preliminary consistency checks. 
All mathematical statements and arguments are the author's responsibility.

\section{Configured domains and Dupont-type fillings}
\label{sec299}
Section~\ref{sec2991} briefly reviews group homology and cohomology
in the homogeneous model, together with smooth and locally smooth
group cohomology. Section~\ref{sec2d991} introduces configured domains, integral and
localized acyclicity of their associated chain complexes, and the
corresponding cochain complexes.
Section~\ref{sec2e992} then recalls Dupont-type fillings and the
configured cochains obtained by integrating invariant differential
forms.

The configured-domain language provides the homological framework,
while Dupont's construction provides the geometric model for the
\(U\)-small domains and local affine fillings used later.

\subsection{Review of group (co)homology and (locally) smooth group cohomology}\label{sec2991}

We first recall group (co)homology using the homogeneous complex.
Throughout this subsection $G$ denotes a (discrete) group.

For $n \ge 0$ and $0 \le i \le n$, we define the $i$-th face map
\begin{equation}\label{maxx}
 d_i^n:G^{n+1}\longrightarrow G^n;\qquad
 (g_0,\dots,g_n)\mapsto (g_0,\dots,g_{i-1},g_{i+1},\dots,g_n).
\end{equation}
Let $C_n^{\Delta}(G  )$ be the free $\Z$-module $\Z \langle G^{n+1} \rangle$ with basis $G^{n+1}$.
We consider the diagonal action $G \curvearrowright G^{n+1}$ and denote by
$ C_n^{\Delta}(G  )_{G}$ the module of coinvariants, i.e., the quotient module of
$C_n^{\Delta}(G  )$ by the 
action.
For $a \in G^{n+1}$, we set
\[
 \partial_n [a] := \sum_{i: 0 \leq i \leq n} (-1)^i [ d_{i}^{n}(a) ] \in C_{n-1}^{\Delta}(G  )_{G},
\]
and extend $\Z$-linearly to obtain a boundary map
$\partial_n:C_n^{\Delta}(G)_G\ra C_{n-1}^{\Delta}(G)_G$.
The homology of this quotient complex is called the \emph{group homology of $G$} and is denoted
by $H_n^{\rm gr}(G )$.

Dually, we recall the definition of group cohomology (see \cite{Bro} for details).
Let $A$ be an abelian group.
We define $ C^n_{\Delta} ( G ;A) := \{ f : G^{n+1} \ra  A \}$
to be the module consisting of all maps $G^{n+1} \to A$.
For $f \in C^n_{\Delta} ( G ;A)$, we define the coboundary
$\partial^n(f): G^{n+2} \to A$ by the usual simplicial formula $
 \partial^n(f)(a)=\sum_{i: 0 \leq i \leq n+1} (-1)^i f \bigl( d_{i}^{n+1}(a) \bigr)$ for $a\in G^{n+2}$.
Again $G$ acts diagonally on $G^{n+1}$, and we denote the $G$-invariant part of
$C^n_{\Delta} ( G ;A)$ by $C^n_{\Delta} ( G ;A)^{G}$.
Since $\partial^{n+1} \circ \partial^n =0$, the \emph{group cohomology}
$H^n_{\rm gr}(G;A)$ is defined as
\[
 H^n_{\rm gr}(G;A)
 :=
 \frac{\Ker(\partial^n ) \cap C^n_{\Delta} ( G ;A)^{G}}
      {\partial^{n-1}\bigl(C^{n-1}_{\Delta} ( G ;A)^{G}\bigr)}.
\]
If $G$ is regarded as a discrete group, then the ordinary cohomology of the
Eilenberg--MacLane space $K(G,1)$ with coefficients in $A$ is canonically
isomorphic to $H^n_{\rm gr}(G; A)$.
We also consider the pairing (evaluation)
\[\langle \, , \, \rangle :  C^n_{\Delta} ( G ;A) ^G \times C_n^{\Delta} ( G  )_G \longrightarrow A; \qquad (f,a) \mapsto f(a). \]
This induces a pairing $H^n_{\rm gr} ( G ;A) \times H_n^{\rm gr} ( G  ) \ra A$, called {\it the Kronecker pairing}.

%

Next, we recall smooth and locally smooth group cohomology.
From now on, we assume that $\GG$ is a (Fr\'echet) Lie group
and that $A$ is one of
$\R$, $\R/\Z$, $\C$, or $\C/\Z$ endowed with the standard topology, regarded as a trivial
topological $\GG$-module.
We denote by $C^n_{\rm sm} ( \GG ;A)^{\GG}$ the group of $\GG$-invariant smooth maps
$\GG^{n+1} \to A$, forming a subcomplex of $C^*_{\Delta} ( \GG ;A)^{\GG}$.
We call the cohomology of globally smooth homogeneous cochains the
global smooth cohomology and denote it by \(H^*_{\rm sm}(\GG;A)\).
The inclusion $ C^*_{\rm sm} ( \GG ;A)^{\GG} \hookrightarrow C^*_{\Delta} ( \GG ;A)^{\GG}$ induces a homomorphism
$\mathrm{Comp}:H^*_{\rm sm}(\GG;A)\ra H^*_{\rm gr}(\GG;A)$.
We call $\mathrm{Comp}$ the \emph{comparison map}.
In the sequel we shall always work with smooth homogeneous cochains
and refer to $H^*_{\rm sm}(\GG;A)$ as the smooth cohomology of $\GG$.


Finally, we recall the notion of locally smooth cohomology (see, for example, \cite{WW,Neeb}).
For $n \ge 0$, we define the group of $n$-cochains
\[
C^n_{\mathrm{loc,sm}}(\GG;A)
:=\left\{\, f:\GG^{n+1}\longrightarrow A \ \middle|\
\begin{aligned}
&\text{There exists an open subset } \,U_f\subset \GG^{n+1} \text{ such that}\\
& (e, \dots, e) \in U_f,\text{ and the restriction } f|_{U_f}\ \text{is smooth.}
\end{aligned}
\right\}.
\]
We denote by $C^*_{{\rm loc,sm}} ( \GG ;A)^{\GG}$ the subcomplex of $\GG$-invariant cochains,
and write $ H^n_{{\rm loc,sm}}(\GG;A)$
for the corresponding cohomology $ H^n\bigl( C^*_{{\rm loc,sm}} ( \GG ;A)^{\GG}, \partial^* \bigr)$.
For a detailed comparison, we refer the reader to \cite{WW,Neeb}.

\subsection{Configured domains and cohomological comparison}
\label{sec2d991}
The full homogeneous complex is often larger than the domain on which a
geometric group cocycle is naturally defined.  We therefore separate three
pieces of structure: a restricted simplicial domain, its homological
comparison with the homogeneous bar complex, and a smooth filling on that
domain.  The first two are introduced in this subsection; fillings are
discussed in the next subsection.

This distinction will also clarify the role of the \(U\)-small domains used
in Section~\ref{sec223344}.  Those domains are chosen primarily because they support canonical
local fillings.  Their extension to the full homogeneous complex will be
obtained by a torsion argument rather than by assuming their acyclicity.
\begin{defn}\label{asfa}
Let \(\GG\) be a Fr\'echet--Lie group.  A sequence of open subsets
\[
 \Ss=\{S_n\subset\GG^{n+1}\}_{n\geq0}
\]
is called a \emph{configured domain} if \(S_0=\GG\), each \(S_n\)
is invariant under the diagonal action of \(\GG\), and
\begin{equation}\label{condition1} 
 d_i^n(S_n)\subset S_{n-1}
 \qquad
 (0\leq i\leq n).
\end{equation} 

Fix integers \(q\geq1\) and \(\nu\geq1\), and let
$ \mathbb Z[1/\nu]\langle S_n\rangle$ be the free $\Z[1 /\nu]$-module generated by $ S_n.$
We say that \(\Ss\) is
\emph{\(\mathbb Z[1/\nu]\)-acyclic through degree \(q-1\)}, or
equivalently \emph{acyclic away from \(\nu\) through degree \(q-1\)},
if the augmented configured chain complex
\[
 \mathbb Z[1/\nu]\langle S_q\rangle
 \longrightarrow
 \mathbb Z[1/\nu]\langle S_{q-1}\rangle
 \longrightarrow\cdots\longrightarrow
 \mathbb Z[1/\nu]\langle S_0\rangle
 \longrightarrow
 \mathbb Z[1/\nu]
 \longrightarrow0
\]
is exact at every displayed term except possibly the leftmost one.

When \(\nu=1\), we say that \(\Ss\) is
\emph{integrally acyclic through degree \(q-1\)}.
\end{defn}
\begin{rem}
Equivalently, \(\Ss\) is acyclic away from \(\nu\) through degree
\(q-1\) if its augmented configured complex is exact through that
degree after localization at every prime \(p\nmid\nu\).  Thus the
condition depends only on the prime divisors of \(\nu\), and is weaker
than requiring \(\nu\) itself to annihilate the homology.
\end{rem}


For an abelian group \(A\) with trivial \(\GG\)-action and \(n\geq0\), define
\[
C^n(\Ss;A)^{\GG}  := \{ f:S_n\longrightarrow A \mid f \text{ is $\GG$-invariant} \}.\] 
Since \(\Ss\) is a configured domain, these groups form a cochain
complex.
By the simplicial formula for $\partial^*$, these form cochain complexes, and 
we
denote the cohomology groups by $H_{\Ss}^*(\GG;A)$. 
Furthermore, by a standard discussion of comparison theorem (see \cite{Bro}), if \(A\) is a \(\mathbb Z[1/\nu]\)-module with trivial \(\GG\)-action, then restriction induces an isomorphism
$
 H^k_{\rm gr}(\GG;A)
 \ra 
 H^k_{\Ss}(\GG;A)
$
for \(k<q\), and an injection for \(k=q\).

We note that the comparison map $\mathrm{Comp}:H_{\rm sm}^*(\GG;A)\to H^*_{\rm gr}(\GG;A)$
admits a comparison zigzag of the form
\begin{equation}\label{azaa}
 \mathrm{Comp}
 :H_{\rm sm}^*(\GG;A) \stackrel{i^*}{\longrightarrow}  H_{\rm \Ss, sm}^*(\GG;A) \longrightarrow
 H_{\Ss }^*(\GG;A) \stackrel{\simeq}{\longleftarrow} H^*_{\rm gr}(\GG;A)
\end{equation}
where $i^*$ is induced by $i_n$, and $* <q$.
Thus, in the integrally acyclic case, configured cochains provide an
intermediate model for the comparison map from globally smooth group
cohomology to ordinary group cohomology.
One of the main practical uses of such cocycles is their pairing with cycles in
$H_*^{\rm gr}(\GG)$.

It is in general difficult to verify the required acyclicity. 
However, we record the following classical sufficient criterion for acyclicity.
\begin{lem}\label{iine}
Let \(\Ss\) be a configured domain, and let $\nu =1$.  Suppose that, for every
\(0\leq n\leq q\) and every finite subset \(F\subset S_n\), there exists
\(y\in\GG\) such that
\((x_0,\ldots,x_n,y)\in S_{n+1}\) for every
\((x_0,\ldots,x_n)\in F\).
Then \(\Ss\) is 
\(\mathbb Z\)-acyclic through degree \(q-1\).
\end{lem}
\begin{proof}
Append \(y\), with sign \((-1)^{n+1}\), to every simplex occurring in a
cycle.  The usual cone identity shows that the resulting chain has the
given cycle as its boundary.  In degree zero, the additional \(y\)-term
vanishes because the cycle has augmentation zero.
\end{proof}

\begin{exa}[Pairwise-distinct configurations]\label{aa}
Let \(\GG\) be an infinite Lie group, and let \(S_n\subset\GG^{n+1}\)
consist of the tuples with pairwise distinct entries.
These sets form an open configured domain.  Given finitely many such
tuples, one may choose a new entry distinct from all entries occurring in
them.  Lemma~\ref{iine} therefore shows that this domain is
\(\mathbb Z\)-acyclic through every finite length.  This is the standard
configuration complex occurring, for example, in algebraic \(K\)-theory
and scissors-congruence constructions; see \cite{Wei,Dup2,DZ}.
\end{exa}

\begin{exa}\label{i125511}
As a related example, consider the case $G=\SL_2(\C)$.
Define a map
\(h:\SL_2(\C)\rightarrow\mathbb{C}P^1\), \(A\mapsto A\cdot[1:0]\),
whose restriction to $\SU(2)$ is the Hopf fibration.
For $n > 0$, we set
\begin{equation}\label{989796}
 S_n^{\neq h}(G)
 :=
 \bigl\{(g_0,g_1,\dots,g_n) \in \SL_2(\C)^{n+1} \mid h(g_i) \neq h(g_j) \text{ for } i \neq  j \bigr\},
\end{equation}
and $ S_0^{\neq h} (G) =G$. 
As in Example~\ref{aa}, the same argument shows that this sequence is 
\(\mathbb Z\)-acyclic through every finite length.
This complex is discussed in \cite{DZ} and reappears in Section~\ref{sec2233}.
\end{exa}
Further examples appear in Theorem~\ref{tsukau}.

Configured domains provide restricted face-stable domains on which
geometric constructions may be defined.  Acyclicity of the associated
configured chain complex is a separate homological condition:
integral acyclicity gives comparison with ordinary group cohomology,
whereas acyclicity away from \(\nu\) gives the corresponding
comparison for coefficients on which \(\nu\) acts invertibly.

\subsection{Fillings and Dupont's classical model}
\label{sec2e992}
The concept of \(\mathbb Z\)-acyclic configured domain is a homological property of a configured domain.
To integrate differential forms, we need additional geometric data: a
face-compatible family of smooth singular simplices.  We call such data a
filling.  

We begin by fixing notation.
Let $K\subset \GG$ be a closed Lie subgroup, and assume that
the quotient $\GG/K$ carries a natural structure of a Fr\'echet manifold.
Let $\Delta^i\subset \R^{i+1}$ denote the standard $i$-simplex.
For each $i\ge0$, we let $C_{i}^{\rm sm} (\GG/K )$ be the free abelian group generated by
all smooth maps $\Delta^i \to \GG/K$.
Then $C_{*}^{\rm sm} (\GG/K )$ is a subcomplex of the usual singular chain complex of
$\GG/K$.
Via the natural left action $\GG\curvearrowright \GG/K$, the complex $C_{*}^{\rm sm} (\GG/K )$ becomes a left $\Z[\GG]$-module in each degree.

\begin{defn}[{cf.~\cite{Dup2,DK}}]\label{33}
Let $\Ss=\{S_n\subset \GG^{n+1}\}_{ 0 \leq n\leq q }$ be a sequence of length $q$ satisfying
the face condition \eqref{condition1}.
We say that a $\Z[\GG]$-module homomorphism
\[
 \sigma_n : \Z \langle S_n \rangle \longrightarrow C_{n}^{\rm sm} (\GG/K ) ,
 \qquad 0\le n\le q,
\]
is a \emph{filling over $\Ss$} (of length $q$) if the family
$\sigma=\{\sigma_n\}_{n\leq q }$ extends to a chain map between augmented complexes, i.e.\ if
the following diagram commutes:
\[
\xymatrix{
 \Z\ar@=[d]
 & \Z \langle S_0 \rangle \ar[l]_{\!\!\!\!\!\! \epsilon } \ar[d]^{ \sigma_0}
 &  \Z \langle S_1 \rangle\ar[l]_{\partial_1} \ar[d]^{ \sigma_1}
 & \ar[l]_{\ \ \ \ \partial_2}  \cdots
 & \ar[l]_{\!\!\!\!\!\!\!\!\!\!\!\!\!\!\! \partial_{q }}  \Z \langle S_q  \rangle \ar[d]^{\sigma_q}
 \\
 \Z
 & C_{0}^{\rm sm} (\GG/K ) \ar[l]_{\!\!\!\!\!\!\!\!\!\!\!\!\epsilon }
 & C_{1}^{\rm sm} (\GG/K )\ar[l]_{\partial_1}
 & \ar[l]_{\ \ \ \ \partial_2}   \cdots
 &  C_{q }^{\rm sm} (\GG/K ). \ar[l]_{\!\!\!\!\!\!\!\!\!\!\!\!\!\!\! \partial_{q }}
}
\]
\end{defn}

Let $\Lambda \subset \R$ be a discrete additive subgroup (possibly $\Lambda=0$).
Let $\tilde{\lambda} \in \AAA^i(\GG/K)^\GG$ be a $\GG$-invariant smooth $i$-form on $\GG/K$.
For a filling $\sigma$ over $\Ss$ we define
\begin{equation}\label{888s}
 \mathcal{F}_{\sigma}(\tilde{\lambda}) : S_i \longrightarrow \R/\Lambda ;  \qquad
 (g_0,\dots, g_i)\mapsto \int_{\sigma_i (g_0,\dots, g_i )} \tilde{\lambda}
 \quad \text{mod }\Lambda.
\end{equation}
\begin{defn}
A filling $\sigma$ over $\Ss$ is said to be
\emph{smooth modulo $\Lambda$} if for every $0\le i\le q$ and every
$\GG$-invariant $i$-form $\tilde{\lambda} \in \AAA^i(\GG/K)^{\GG}$ the map
$\mathcal{F}_{\sigma}(\tilde{\lambda})$ defined in~\eqref{888s} is smooth.
\end{defn}
\begin{rem}\label{aaa}
A smooth configured cochain is defined only on its restricted 
simplicial domain and need not be the restriction of a group cocycle.
For the \(U\)-small domains used below,
Proposition~\ref{baas} constructs, under homological torsion and
period hypotheses, a locally smooth group cocycle with the required
local restriction.
\end{rem}
When $S_i=\GG^{i+1}$ for all $i$, the notion of filling in Definition~\ref{33} reduces to
the construction originally introduced by Dupont \cite{Dup2}.
For proper subsets $S_n\subsetneq \GG^{n+1}$, the notion of a filling is not always stated explicitly in the literature, but concrete examples appear in
\cite{DK,DZ} and many related works (see also Examples~\ref{aa} and~\ref{i12} below).

If $i<q$ and $ d \tilde{\lambda}=0$, then $ \mathcal{F}_{\sigma}(\tilde{\lambda})$ is an $i$-cocycle; however, when $i=q$,
to ensure that $ \mathcal{F}_{\sigma}(\tilde{\lambda})$ is a $q$-cocycle, we need geometric conditions on $\Ss$ and $\Lambda$.
We give two examples. First, we recall Dupont's existence result in the homogeneous case.
\begin{prop}[{\cite[Theorem 10.2]{Dup2}, \cite[Theorems 5.2--5.3]{DK} }]\label{312}
Let $G$ be a Lie group and $K\subset G$ a closed subgroup such that $G/K$ is
$(q-1)$-connected and $H_q(G/K )\cong \Z$.
For the sequence $S_n=G^{n+1}$, there exists a filling
$\sigma_{G/K}:=\{\sigma_n\}$ of length $q$.
Moreover, any two such $\sigma_{G/K}$ are chain homotopic through degree $q-1$.

If $\lambda\in \AAA^q(G/K)^G$ represents a generator of $H^q_{\rm dR}(G/K)$ dual to a generator of $H_q(G/K ) \cong \Z$, then 
$\mathcal{F}_{\sigma_{G/K}}(\lambda)$ modulo $\Z$ is a
$\R/ \Z$-valued group $q$-cocycle.
Moreover, the $\R/\Z$-valued cohomology class $[\mathcal{F}_{\sigma_{G/K}}(\lambda)]$ 
does not depend on the chosen filling $\sigma_{G/K}$.
\end{prop}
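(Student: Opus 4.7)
The plan is to construct the filling $\sigma=\{\sigma_n\}_{0\le n\le q}$ inductively on degree, enforcing $G$-equivariance at each stage, and then to deduce the cocycle property of $\mathcal{F}_{\sigma}(\lambda)\bmod \Z$ from $H_q(G/K)\cong \Z$ together with the integrality normalization of $\lambda$. I would begin by setting $\sigma_0(g):=gK\in G/K$, viewed as a smooth $0$-simplex; this is manifestly $G$-equivariant and lifts the augmentation. For the inductive step, assume $\sigma_0,\dots,\sigma_{n-1}$ with $n\le q$ have been constructed as $G$-equivariant smooth chain maps. Since the diagonal $G$-action on $G^{n+1}$ is free with canonical orbit representatives $(e,g_1,\dots,g_n)$, it suffices to prescribe $\sigma_n$ on these representatives smoothly in $(g_1,\dots,g_n)\in G^n$ and then extend $G$-equivariantly. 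The chain
\[
c(g_1,\dots,g_n):=\sigma_{n-1}\bigl(\partial_n(e,g_1,\dots,g_n)\bigr)\in C_{n-1}^{\rm sm}(G/K)
\]
is a smooth $(n-1)$-cycle by induction; since $n-1\le q-1$ and $G/K$ is $(q-1)$-connected, its class in $H_{n-1}(G/K)$ vanishes and a smooth filling exists pointwise. A smoothly parameterized choice of such fillings is furnished by Dupont's canonical simplex-with-prescribed-vertices construction \cite{Dup2,Dup3}, which produces a smooth map $G^n\to C_n^{\rm sm}(G/K)$ that one then extends $G$-equivariantly.

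For uniqueness, any two such fillings $\sigma,\sigma'$ are related by a $G$-equivariant smooth chain homotopy $\{h_n\}$ built by the same inductive scheme: at stage $n$, the chain $\sigma_n'-\sigma_n-h_{n-1}\partial_n$ is a smooth $n$-cycle whose class vanishes in $H_n(G/K)=0$ for $n\le q-1$, so it bounds smoothly in the parameters. For the cocycle property I would compute
\[
\bigl(\partial^q \mathcal{F}_{\sigma}(\lambda)\bigr)(g_0,\dots,g_{q+1})
=\int_{\sigma_q(\partial_{q+1}(g_0,\dots,g_{q+1}))}\lambda,
\]
and observe that $\sigma_q(\partial_{q+1}(g_0,\dots,g_{q+1}))$ is a smooth $q$-cycle in $G/K$ representing a class in $H_q(G/K)\cong \Z$. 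The integrality normalization of $\lambda$ as a generator of the dual $H^q_{\rm dR}(G/K)$ then forces this integral to be an integer, whence $\mathcal{F}_{\sigma}(\lambda)\bmod \Z$ is a genuine $\R/\Z$-valued group $q$-cocycle.

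The main obstacle I anticipate is parametric smoothness at the inductive filling step: pointwise null-homotopies must be upgraded to a choice depending smoothly on $(g_1,\dots,g_n)\in G^n$, not merely pointwise on the parameter space. Dupont's resolution exploits the existence of a canonical simplex with prescribed vertices inside a convex (e.g.\ geodesically convex or exponential-chart) neighborhood of a basepoint in $G/K$, which varies smoothly with its vertices; iterated barycentric application, together with a smooth partition of unity to glue local constructions when no global convex model is available, then delivers the required smoothness in parameters throughout the induction.
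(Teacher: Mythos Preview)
The paper does not supply its own proof of this proposition; it is quoted from \cite[Proposition~10.2]{Dup2} and \cite[Theorems~5.2--5.3]{DK} and immediately followed by examples. So there is no in-paper argument to compare against directly, but your outline matches the standard Dupont argument: inductive construction on orbit representatives using $(q-1)$-connectivity of $G/K$, uniqueness by the same acyclicity, and the cocycle property via integrality of the periods of $\lambda$ on $H_q(G/K)\cong\Z$. That skeleton is correct.

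The one point worth flagging is that the ``main obstacle'' you identify---smooth dependence of $\sigma_n$ on the parameters $(g_1,\dots,g_n)$---is not actually part of the statement as formulated here. In Definition~\ref{33} a filling is merely a $\Z[G]$-linear chain map $\Z\langle S_n\rangle\to C_n^{\rm sm}(G/K)$; the superscript ``sm'' requires only that each individual singular simplex $\Delta^n\to G/K$ be smooth, not that the assignment vary smoothly in the vertices. So at the inductive step you may simply choose, for each normalized tuple separately, \emph{any} smooth chain bounding the smooth cycle $c(g_1,\dots,g_n)$ (smooth and continuous singular homology agree, so such a bounding smooth chain exists), and then extend $\Z[G]$-linearly. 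No partitions of unity or canonical straightening are needed for Proposition~\ref{312} itself; the paper treats parametric smoothness as a separate, geometry-dependent issue (cf.\ the sentence immediately following the proposition and the notion of a filling ``smooth modulo $\Lambda$''). Your argument for the $\R/\Z$-cocycle property is fine as written.
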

The existence and smoothness of $\sigma$ in Proposition~\ref{312} depend on the geometry of $G/K$.
We next give another example.
The next example serves as the finite-dimensional model for the local
construction in the following subsection.

\begin{exa}[Dupont's geodesic straightening]\label{i12}
Let \(G\) be a semisimple Lie group and let \(K\subset G\) be a maximal
compact subgroup.  By the Cartan--Iwasawa decomposition, \(G/K\) is
diffeomorphic to a Euclidean space and carries a \(G\)-invariant Riemannian
metric of non-positive sectional curvature.  Thus \(G/K\) is a Hadamard
manifold.

For \(x,y\in G/K\), denote by
\(\operatorname{Join}(x,y;s)\) the point at time \(s\) on the unique
geodesic segment from \(x\) to \(y\).
Dupont's straight simplices are defined recursively by this join operation.
For \(n=0\), set
$
        \sigma_0(g_0)\equiv [g_0]\in G/K.
$
For \(n\ge1\), define
\(\sigma_n(g_0,\ldots,g_n):\Delta^n\to G/K\) by
\begin{equation}\label{Dupontsigma}
 \sigma_n(g_0,\ldots,g_n)\bigl((1-s)u,s\bigr)
 :=
 \mathrm{Join}\bigl(
   \sigma_{n-1}(g_0,\ldots,g_{n-1})(u),\,[g_n];\,s
 \bigr),
\end{equation}
where \(u\in\Delta^{n-1}\) and \(s\in[0,1]\).  The fact that these formulas
define smooth singular simplices on the closed simplices, with the usual
face identities, is part of Dupont's standard geodesic straightening
construction.

Thus \(\sigma=\{\sigma_n\}\) gives a smooth filling in the sense of
Definition~\ref{33}.  For invariant differential forms, integration over
these straight simplices gives Dupont's standard smooth homogeneous
cochains; in the classical setting this construction realizes the van Est
comparison; see \cite[Proposition~1.5]{Dup1} and also
\cite{Dup2,DK}.

The recursive join formula, rather than the global Riemannian geometry of
\(G/K\), is the feature that will be retained below.  For the
\(U\)-small domains in a Fr\'echet--Lie group, we replace the global
geodesic join by a smoothed affine join in a fixed convex chart, which is applicable to $\Diff_\lambda (M)$; see Proposition~\ref{baas}.
\end{exa}

\begin{rem}[The role of the configured framework]
\label{rem:configured-framework}
The preceding constructions separate two requirements that coincide in
Dupont's classical homogeneous setting but need not coincide on a
restricted domain.

Acyclicity is a homological condition ensuring comparison with
group cohomology.  A filling is geometric data used to integrate
differential forms smoothly and compatibly with the face maps.
The pairwise-distinct and generic-position examples illustrate the first
requirement, while Dupont's geodesic straightening illustrates the second.

The \(U\)-small domains introduced in the next section are local
configured domains equipped with canonical smooth fillings.  
We do not
assume that they are 
\(\mathbb Z[1/\nu]\)-acyclic. 
Instead,
Proposition~\ref{baas} uses torsion in the homology of the ambient group to
extend the resulting local configured cochains to locally smooth
homogeneous group cocycles.
\end{rem}

\section{Local affine fillings and torsion-controlled extensions for
\(\Diff_\lambda(M)\)}
\label{sec33233}

The preceding section presented Dupont's geodesic straightening as the
classical model for integrating invariant differential forms over
face-compatible simplices.  We now construct its local analogue for a
Fr\'echet--Lie group.

For a sufficiently small identity neighborhood \(U\), the domains
\(S_{n,\GG,U}\) consist of tuples whose relative differences lie in \(U\).
They form a configured domain containing a neighborhood of the diagonal.
Using a convex chart, we construct a smooth filling on this domain by
replacing Dupont's geodesic join with a smoothed affine join; see Section \ref{sec223344}.
We then extend the resulting local cochains to the full homogeneous complex
by a torsion argument (Section~\ref{secDiff0Configured}).

Throughout this paper, 
let $\GG$ be a Fr\'echet--Lie group with Lie algebra $\g:=T_e\GG$.

\subsection{Affine simplices in Fr\'echet--Lie groups}\label{sec223344}

The purpose of this subsection is to construct local face-compatible fillings
that work directly in the $C^\infty$ (Fr\'echet) setting.  The construction uses
only a fixed convex chart near the identity.

Fix a smooth chart
$\phi:U_0\stackrel{\cong}{\rightarrow}V_0\subset\g$ around the identity
$e\in\GG$ such that $\phi(e)=0$, $d\phi_e=\operatorname{id}_{\g}$, and $V_0$
is open and convex.

\begin{defn}\label{def:U-small}
For an identity neighborhood $U\subset\GG$ and $n\ge0$, define the following two sets:
\begin{align}
 S_{n,\GG,U}
 &:=
 \bigl\{(g_0,\dots,g_n)\in\GG^{n+1}
 \mid
 g_i^{-1}g_j\in U
 \ \text{for all }0\le i,j\le n
 \bigr\}, \label{ball} \\
 S^e_{n,\GG,U}
 &:=
 \bigl\{(h_1,\dots,h_n)\in\GG^n
 \mid
 (e,h_1,\dots,h_n)\in S_{n,\GG,U}
 \bigr\}. \notag
\end{align}
We call an element of \eqref{ball} a $U$-small $(n+1)$-tuple.
\end{defn}

\begin{prop}[Local face-compatible smooth filling]
\label{prop:local-affine-filling}
Fix an integer $m\ge0$.  
Then after choosing the open identity neighborhoods $U\subset\GG$ and $W \subset U_0$ sufficiently small, we have
\begin{equation}\label{Uconditions}
 e\in U,\qquad U\subset U_0. 
\end{equation}
and, for each $0\le n\le m$, there exist smooth maps
\[
 \sigma_n:S_{n,\GG,U}\times\Delta^n\longrightarrow\GG
\]
with the following properties.

\begin{enumerate}
 \item[\textup{(i)}] Vertex condition and image control: for every $0\le n\le m$, every
 $0\le i\le n$, and every $(g_0,\dots,g_n)\in S_{n,\GG,U}$, we have
 $\sigma_n(g_0,\dots,g_n)(v_i)=g_i$, and
 \[
  \sigma_n(g_0,\dots,g_n)(\Delta^n)\subset g_0W.
 \]
\item[\textup{(ii)}]
Left equivariance and face compatibility:
for every \(0\leq i \leq n\leq m\) and \(h\in\GG\),
\begin{align}
 \sigma_n(hg_0,\ldots,hg_n)
 &=
 h\,\sigma_n(g_0,\ldots,g_n), \notag \\
 \left.
 \sigma_n(g_0,\ldots,g_n)
 \right|_{d_i\Delta^{n-1}}
 &=
 \sigma_{n-1}(g_0,\ldots,\widehat g_i,\ldots,g_n). \label{ddde}
\end{align}
\end{enumerate}
Consequently, $\Ss_{\GG,U}:=\{S_{n,\GG,U}\}_{0\le n\le m}$ satisfies the simplicial condition,
and $\sigma=\{\sigma_n\}_{0\le n\le m}$ gives a filling over $\Ss_{\GG,U}$.
Moreover, for each $0\le n\le m$ and each $\GG$-invariant $n$-form
$\tilde{\lambda} \in\Omega^n(\GG)^\GG$, the following map
is a smooth function on $S_{n,\GG,U}$:
\begin{equation}\label{gyou} 
 (g_0,\dots,g_n)
 \mapsto
 \int_{\sigma_n(g_0,\dots,g_n)}\tilde{\lambda} 
\end{equation} 
\end{prop}

\begin{proof}
Choose an open identity neighborhood $W\subset U_0$ such that
$W^{-1}W\subset U_0$.  Fix a smooth function
$\rho:[0,1]\to[0,1]$ which is $0$ near $0$ and $1$ near $1$.  Whenever
$x^{-1}y\in U_0$, set
\[
{\rm Join}_\phi(x,y;s)
 :=
 x\,\phi^{-1}\!\bigl(\rho(s)\phi(x^{-1}y)\bigr).
\]
This is well-defined because $V_0$ is convex and contains $0$; it is smooth
and left equivariant.

Set $\sigma_0(g_0)=g_0$.  We construct the higher maps inductively,
shrinking the identity neighborhood at each stage.  Suppose that, for an
identity neighborhood $V$, the maps $\sigma_k$, $k<n$, have been constructed
on $S_{k,\GG,V}$, satisfy the required vertex, equivariance, and face
conditions, and satisfy $\sigma_k(e,\ldots,e)\equiv e$.

On $S^e_{n,\GG,V}\times\Delta^{n-1}$ consider the smooth map
\[
 \overline F_n(h_1,\ldots,h_n;u)
 :=
 \sigma_{n-1}(e,h_1,\ldots,h_{n-1})(u)^{-1}h_n.
\]
It is identically $e$ on
$\{(e,\ldots,e)\}\times\Delta^{n-1}$.  Since $\Delta^{n-1}$ is compact,
there is an identity neighborhood $V'\subset V$ such that
$(V')^n\subset S^e_{n,\GG,V}$ and
$\overline F_n((V')^n\times\Delta^{n-1})\subset U_0$.

Let $(g_0,\ldots,g_n)\in S_{n,\GG,V'}$ and put
$h_i=g_0^{-1}g_i$.  Left equivariance of $\sigma_{n-1}$ gives
\[
 \sigma_{n-1}(g_0,\ldots,g_{n-1})(u)^{-1}g_n
 =\overline F_n(h_1,\ldots,h_n;u)\in U_0.
\]
In the cone coordinates $((1-s)u,s)\in\Delta^n$, define, for $0\leq s<1$,
\[
 \sigma_n(g_0,\ldots,g_n)((1-s)u,s)
 :=
{\rm Join}_\phi \!\left(
   \sigma_{n-1}(g_0,\ldots,g_{n-1})(u),g_n;s
 \right),
\]
and set $\sigma_n(g_0,\ldots,g_n)(v_n)=g_n$.

Because $\rho=1$ near $1$, this formula is independent of $u$ near the cone
vertex and hence extends jointly smoothly across $v_n$.  Because $\rho=0$
near $0$, its restriction to the base face is
$\sigma_{n-1}(g_0,\ldots,g_{n-1})$.  On every other face, the induction
hypothesis and the use of the same function $\rho$ give \eqref{ddde}.  The
vertex conditions and left equivariance follow at once, and
${\rm Join}_\phi(e,e;s)=e$ gives $\sigma_n(e,\ldots,e)\equiv e$.

After the finite induction, restrict all maps to the final identity
neighborhood $V_*$.  For each $1\leq n\leq m$, joint smoothness and compactness
of $\Delta^n$ give a neighborhood $\mathcal O_n$ of $(e,\ldots,e)$ in
$S^e_{n,\GG,V_*}$ such that
$\sigma_n(e,h_1,\ldots,h_n)(\Delta^n)\subset W$ whenever
$(h_1,\ldots,h_n)\in\mathcal O_n$.  Choose one identity neighborhood
$U\subset V_*$ such that $U^n\subset\mathcal O_n$ for all
$1\leq n\leq m$, and shrink it further so that
$U\subset U_0$ and $U^{-1}U\subset U_0$.

If $(g_0,\ldots,g_n)\in S_{n,\GG,U}$, then
$(g_0^{-1}g_1,\ldots,g_0^{-1}g_n)\in U^n$, and left equivariance yields
\[
 \sigma_n(g_0,\ldots,g_n)(\Delta^n)
 =g_0\,\sigma_n(e,g_0^{-1}g_1,\ldots,g_0^{-1}g_n)(\Delta^n)
 \subset g_0W.
\]
The case $n=0$ is immediate.  Since the sets $S_{n,\GG,U}$ are invariant
under the diagonal action and the face maps, the linear extensions of the
$\sigma_n$ form the required filling.

Finally, $\sigma_n$ is jointly smooth in the tuple and simplex variables.
Smooth parameter-dependent integration over the compact simplex therefore
shows that the function in \eqref{gyou} is smooth; see
\cite[\S3.15]{KrieglMichor}.
\end{proof}


\subsection{Torsion-controlled extensions for \(\Diff_\lambda(M)\)}
\label{secDiff0Configured}
Fix an integer $q\geq2$ and assume that $\Diff_\lambda(M)$ is a
Fr\'echet--Lie subgroup of $\Diff(M)$ whose identity component admits a
convex identity chart as in Proposition~\ref{prop:local-affine-filling}.  We
use the natural comparison between smooth and ordinary singular homotopy and
homology for the Fr\'echet manifolds considered here.  Thus every homotopy
class represented by a map from a finite-dimensional sphere has a smooth
representative, and a homological relation among smooth singular cycles can
be realized by a smooth singular chain (see \cite[Corollary~11.3]{Kihara} and \cite[Theorem~0.11]{Wockel} for details).

We begin with the following hypothesis.
\begin{defn}\label{assume}
 We say that
$(M,\lambda)$ satisfies assumption $(\dagger)_q$ if
$\pi_0(\Diff_\lambda(M))$ is finite, $\nu_M<\infty$, and
$\pi_j(\Diff_\lambda(M))=0$ for $1<j<q$.

In what follows, we denote the subgroup $\Diff_{\lambda,0}(M)$ by $\GG_M$ and $| \pi_1(\GG_M )|\in \N $ by $\nu_M$.
\end{defn}
The finiteness of $\pi_0(\Diff_\lambda(M))$ is included for the later transfer
arguments and is not used in Proposition~\ref{baas}.  The required homotopy
information will be recalled in each application.
As mentioned in Sections~\ref{sec3a884} and~\ref{sec3a8842}, 
the homotopy types of the groups in Theorems~\ref{main1}--\ref{main3} 
are given in Table~\ref{tab:manifold-groups}; thus, these situations satisfy $(\dagger)_3 $.
\begin{table}[htbp]
\centering
\begin{tabular}{lllll}
\hline
$(M,\lambda) $ & Homotopy type of $\GG_0$ & $|\pi_1|$ & $\pi_2$ & $\pi_3$ \\ \hline
$(S^3,v_{S^3})$ & $\SO(4)$ & 2 & 0 & $\mathbb{Z}^2$ \\
$(\text{noncyclic } S^3/\Gamma,v) $ & $\SO(3)$ & $2$ & 0 & $\mathbb{Z}$ \\
$(\mathbb{C}P^n ,\omega_n)$ & $\mathrm{PSU}(n+1)$ & $n+1$ & 0 & $\mathbb{Z}$ \\
$(\mathbb{C}P^1 \times \mathbb{C}P^1, \omega_1 \oplus \omega_1 )$ & $\SO(3) \times \SO(3)$ & 4 & 0 & $\mathbb{Z}^2$ \\ \hline
\end{tabular}
\caption{Types of $\GG_0$ and homotopy groups for each manifold $M$ with $\GG= \Diff_{\lambda}(M)$. Here $n=1,2$ in the third line.}
\label{tab:manifold-groups}
\end{table}

The purpose of this section is to show Proposition~\ref{baas}, which is a key result.

\begin{lem}[Homological torsion]\label{key3}
If $(M,\lambda)$ satisfies $(\dagger)_q$, then
\[
 \nu_M H_j(\GG_M;\mathbb Z)=0\qquad(1\leq j<q),
\]
and the cokernel of
$\operatorname{hur}_q:\pi_q(\GG_M)\to H_q(\GG_M;\mathbb Z)$ is annihilated
by $\nu_M$.
\end{lem}

\begin{proof}
Let $p:\widetilde\GG_M\to\GG_M$ be the universal covering, which is
$\nu_M$-sheeted.  As shown in \cite{Kihara,Wockel}, 
the Hurewicz theorem holds in the class of diffeomorphism groups of closed manifolds;
The total space $\widetilde\GG_M $ is $(q-1)$-connected, so the Hurewicz theorem gives
$H_j(\widetilde\GG_M;\mathbb Z)=0$ for $0<j<q$ and an isomorphism
$\pi_q(\widetilde\GG_M)\cong H_q(\widetilde\GG_M;\mathbb Z)$.

Let $\operatorname{tr}$ be the transfer.  Since
$p_*\operatorname{tr}=\nu_M\operatorname{id}$ by the transfer identity, the vanishing above gives
$\nu_M H_j(\GG_M;\mathbb Z)=0$ for $1\leq j<q$.  For
$x\in H_q(\GG_M;\mathbb Z)$, choose
$\widetilde\alpha\in\pi_q(\widetilde\GG_M)$ with
$\operatorname{hur}_q(\widetilde\alpha)=\operatorname{tr}(x)$.
Naturality of the Hurewicz map then gives
$\nu_Mx=\operatorname{hur}_q(p_\#\widetilde\alpha)$.
\end{proof}

\begin{prop}[Torsion-controlled extension]\label{baas}
Assume that $(M,\lambda)$ satisfies $(\dagger)_q$.  Choose $U$ and the local
filling
$\sigma_j:\mathbb Z\langle S_{j,\GG_M,U}\rangle\to
C_j^{\rm sm}(\GG_M)$, $0\leq j\leq q$, supplied by
Proposition~\ref{prop:local-affine-filling}.  Let
$\widetilde\lambda\in\AAA^q(\GG_M)^{\GG_M}$ be closed, and let
$\Lambda\subset\mathbb R$ be a discrete additive subgroup.  Suppose that
$\int_{S^q}c^*\widetilde\lambda\in\Lambda$ for smooth based
representatives $c:(S^q,s_0)\to(\GG_M,e)$ of a set of generators of
$\pi_q(\GG_M)$.

Then there exists a $\GG_M$-invariant locally smooth homogeneous group
$q$-cocycle $\Phi:\GG_M^{q+1}\to\mathbb R/\Lambda$ such that
\[
 \left.\Phi\right|_{S_{q,\GG_M,U}}
 =\nu_M^q\mathcal F_\sigma(\widetilde\lambda).
\]
\end{prop}

\begin{proof}
Write $G=\GG_M$ and $\nu=\nu_M$.  Since $\widetilde\lambda$ is closed,
integration over based $q$-spheres factors through the Hurewicz map and
defines a homomorphism on $\pi_q(G)$.  Thus the period condition holds for
every element of $\pi_q(G)$.

We construct $G$-equivariant
homomorphisms $T_j:C_j^\Delta(G)\to C_j^{\rm sm}(G)$, $0\leq j\leq q$,
with
\begin{equation}\label{eq:torsion-twisted-filling}
\begin{aligned}
 T_0(g)&=[g],
 &T_j\big|_{\mathbb Z\langle S_{j,G,U}\rangle}
   &=\nu^{j-1}\sigma_j &&(1\leq j\leq q),\\
 \partial T_1&=T_0\partial,
 &\partial T_j&=\nu T_{j-1}\partial &&(2\leq j\leq q).
\end{aligned}
\end{equation}
The diagonal action of $G$ on $G^{j+1}$ is free, and $S_{j,G,U}$ is a union
of orbits.  Choose one representative from each orbit in
$G^{j+1}\setminus S_{j,G,U}$; all choices below are extended
$G$-equivariantly and $\mathbb Z$-linearly.

Set $T_0(g)=[g]$ and prescribe the local values in
\eqref{eq:torsion-twisted-filling}.  Face compatibility of $\sigma$ gives the
required boundary identities on the local submodules.  For a chosen tuple
$b=(g_0,g_1)\notin S_{1,G,U}$, the cycle $[g_1]-[g_0]$ is zero in
$H_0(G;\mathbb Z)$ because $G$ is connected.  Smooth singular comparison
therefore gives a smooth $1$-chain $T_1(b)$ with
$\partial T_1(b)=[g_1]-[g_0]$.

Inductively, let $2\leq j\leq q$ and suppose that $T_0,\ldots,T_{j-1}$ have
been constructed.  For a chosen tuple
$b\in G^{j+1}\setminus S_{j,G,U}$, put $z_b=T_{j-1}(\partial b)$.  The
previous boundary identity and $\partial^2=0$ show that $z_b$ is a smooth
$(j-1)$-cycle.  Since $j-1<q$, Lemma~\ref{key3} gives
$\nu[z_b]=0$ in $H_{j-1}(G;\mathbb Z)$.  Smooth singular comparison now
provides a smooth $j$-chain $T_j(b)$ with
$\partial T_j(b)=\nu z_b$.  This completes the induction and proves
\eqref{eq:torsion-twisted-filling}.  Notice that the complement of
$S_{j,G,U}$ need not be closed under faces: the construction uses only that
$T_{j-1}$ is already defined on all of $C_{j-1}^\Delta(G)$.  In particular,
no acyclicity of the local subcomplex is used.

For $(g_0,\ldots,g_q)\in G^{q+1}$, define
\[
 \Phi(g_0,\ldots,g_q)
 :=
  \nu\int_{T_q(g_0,\ldots,g_q)}\widetilde\lambda \in \mathbb R/\Lambda.
\]
Equivariance of $T_q$ and left invariance of $\widetilde\lambda$ make $\Phi$
invariant.  Let $y\in G^{q+2}$ and put $z=T_q(\partial y)$.  Since $q\geq2$,
\eqref{eq:torsion-twisted-filling} gives $\partial z=0$.  By
Lemma~\ref{key3}, there is $\alpha\in\pi_q(G)$ such that
$\operatorname{hur}_q(\alpha)=\nu[z]$.  Choose a smooth based representative
$c:S^q\to G$ of $\alpha$ and a smooth singular fundamental cycle
$\zeta_q\in C_q^{\rm sm}(S^q)$.  Smooth singular comparison yields a smooth
$(q+1)$-chain $a$ satisfying $\partial a=\nu z-c_*\zeta_q$.  Stokes' theorem
and the period assumption give
\[
 (\partial^q\Phi)(y)
 = \nu\int_z\widetilde\lambda 
 =\int_{S^q}c^*\widetilde\lambda 
 =0  \in \mathbb R/\Lambda.
\]
Thus $\Phi$ is a homogeneous group cocycle.

For a tuple in $S_{q,G,U}$, the local prescription in
\eqref{eq:torsion-twisted-filling} gives
$T_q=\nu^{q-1}\sigma_q$, and hence
$\Phi=\nu^q\mathcal F_\sigma(\widetilde\lambda)$ there.  The latter is smooth
on the open neighborhood $S_{q,G,U}$ of $(e,\ldots,e)$ by
Proposition~\ref{prop:local-affine-filling}.  Therefore $\Phi$ is locally
smooth.
\end{proof}


\subsection{Relation to Gel'fand--Fuks cohomology}\label{Fin.sec}
In Proposition~\ref{baas}, we presented a procedure for constructing locally smooth $q$-cocycles from a $q$-form $\tilde{\lambda}$; 
here, we discuss its relation to Gel'fand--Fuks 
cohomology, and show that it recovers the form $\tilde{\lambda} $ (Proposition~\ref{43333}).
Furthermore, for $M=S^3$, the non-extendability of the locally smooth $3$-cocycle to $\Diff^+(M)$ will be shown in \S \ref{Diff+(S3)}.

Let $ \GG$ be a Fr\'echet--Lie group with Lie algebra $\mathfrak g$,
and suppose that $\GG$ admits a smooth local exponential map $\g \ra \GG$.
For $n\ge0$, we define the smooth Lie algebra complex with trivial coefficients
$\R$ by
\[
 C^n_{\rm GF}(\mathfrak g)
 :=
 \{ \omega:  \mathfrak g^n  \longrightarrow \R \mid
    \omega \text{ is alternating, multilinear, and of class } C^{\infty} \}.
\]
The differential $d_{\mathfrak g}: C^n_{\rm GF}(\mathfrak g) \to C^{n+1}_{\rm GF}(\mathfrak g)$
is given by the usual Chevalley--Eilenberg formula
\[
  (d_{\mathfrak g}\omega)(X_1,\dots,X_{n+1})
= \sum_{i,j: 1 \leq i < j \leq n+1} (-1)^{i+j}\,
    \omega([X_i,X_j], X_1,\dots,\widehat X_i,\dots,\widehat X_j,\dots,X_{n+1} ),
\]
for $X_1,\dots,X_{n+1}\in\mathfrak g$.
It is well known that $d_{\mathfrak g}\circ d_{\mathfrak g}=0$.
The resulting cohomology $H^*_{\rm GF}(\mathfrak g)$ is called the continuous Chevalley--Eilenberg cohomology. If $ \mathfrak g$ is a subalgebra of a vector-field algebra, $H^*_{\rm GF}(\mathfrak g)$  is called the Gel'fand--Fuks cohomology.
It is canonically isomorphic to the cohomology of the complex $(\AAA^*(\GG)^{\GG},d)$ of left-invariant differential forms on $ \GG$;
see, for example, \cite{Fuks}.

Next, we recall a chain map from locally smooth group cochains to
$C^*_{\rm GF}(\mathfrak g)$.
Let 
$A$ be a topological abelian group isomorphic either to $\R$ or to $\R/\Z$, with trivial
$ \GG$-action.
For $f\in C^n_{\rm loc,sm}(\GG;A)^{\GG}$ with $n>0$, we choose a neighborhood $U_f\subset  \GG^{n+1}$
of $e^{\times (n+1)}$ on which $f$ is smooth.
Since $\exp(0)=e$ and $\exp$ is smooth, after shrinking $U_f$ we may choose a
$0$-neighborhood $V_f\subset \mathfrak g$ such that $\bigl(\exp(V_f)\bigr)^{n+1}\subset U_f$.
When $A=\R/\Z$, 
after shrinking $U_f$, its image lies in an evenly covered arc of $\R/\Z$; 
hence $f |_{U_f}$  admits a smooth real-valued lift $\tilde{f}$.
When $A=\R$, we set $\tilde{f}:=f$.
For $X_1,\dots,X_n\in\mathfrak g$, we set
\[
 \Phi_f(X_1,\dots,X_n)
 :=
 \tilde{f} \bigl(
     e,\,
     \exp(X_1),\,
     \exp(X_1)\exp(X_2),\,
     \dots,\,
     \exp(X_1)\cdots\exp(X_n)
   \bigr).
\]
Then $\Phi_f$, regarded as a function on $\mathfrak g^n$, is smooth in a neighborhood of the origin
$(0,\dots,0)$, and we may define the iterated directional derivative
\begin{equation*}
\bigl(T^n f\bigr)(X_1,\dots,X_n)
:=  \frac{\partial^n}{\partial t_1\cdots \partial t_n}\,
\Phi_f(t_1X_1,\dots,t_nX_n)\bigr|_{t_1=\cdots=t_n=0}.
\end{equation*}
Next, we define the alternating $n$-linear map
\[
D(f)(X_1,\dots,X_n)
:=
\sum_{\sigma \in \mathfrak{S}_n} \mathrm{sgn}(\sigma)\,
\bigl(T^n f\bigr)(X_{\sigma(1)},\dots,X_{\sigma(n)}),
\qquad X_1,\dots,X_n\in\mathfrak g.
\]
Then $D(f)\in C^n_{\mathrm{GF}}(\mathfrak g)$, and 
the assignment $D$ is the standard differentiation map from locally smooth
homogeneous group cochains to continuous Chevalley--
Eilenberg cochains and is a cochain map.

The next proposition shows how the cochains $\mathcal{F}_{\sigma}(\tilde{\lambda} )$ in \eqref{888s} 
give rise, under the map $D$, to invariant forms.
\begin{prop}\label{43333}

Let $\GG$ be a Fr\'echet--Lie group and fix $U$ and the smooth filling
$\sigma$ on $\Ss_{\GG,U}$ from Section~\ref{sec223344}.
Let $\tilde{\lambda}\in\AAA^i(\GG)^\GG$ be a closed
$\GG$-invariant $i$-form with $i\geq1$.
Suppose $\Lambda=0$ or that $\Lambda$ is a lattice isomorphic to $\Z$.
Recall from \eqref{888s} the locally smooth $i$-cochain
$\mathcal F_\sigma(\tilde{\lambda})$ defined on
$S_{i,\GG,U}$ with values in $\R/\Lambda$.

Suppose that
$c\,\mathcal F_\sigma(\tilde{\lambda})$
extends to a locally smooth homogeneous group $i$-cocycle
\[
 \Phi:\GG^{i+1}\longrightarrow\R/\Lambda
\]
for some $c\in\N$.
Then
\begin{equation}\label{hoshii}
 D(\Phi)=c\,\tilde{\lambda}_e
 \quad\text{in}\quad
 C^i_{\rm GF}(\mathfrak g) \cong\AAA^i(\GG)^\GG.
\end{equation}
\end{prop}
For completeness, we include the proof in Appendix~\ref{furoku}; the argument is somewhat lengthy.
\begin{rem}
Although we use it only once, Proposition~\ref{43333}
recovers the usual comparison between continuous group cohomology and Lie algebra cohomology
in the finite-dimensional exponential case.
More precisely, if $\mathfrak g$ is finite-dimensional and
$\exp:\mathfrak g\to  \GG$ is a global diffeomorphism, then $ \GG$ is an exponential Lie group, and the van Est theorem implies
$
H^n_{\rm loc,sm}(\GG;\R)\cong H^n_{\rm GF}(\mathfrak g)
$
(equivalently, by \cite[Proposition~1.5]{WW}, $H^n_{\rm sm}(\GG;\R)\cong H^n_{\rm loc,sm}(\GG;\R)$, so $H^n_{\rm sm}(\GG;\R)\cong H^n_{\rm GF}(\mathfrak g)$).
Thus Proposition~\ref{43333} may be viewed as a cocycle-level realization of this comparison in the exponential setting.
\end{rem}

\section{Applications to the locally smooth cohomology of $\Diff_v(M)$}\label{sec2233}
The goal of this section is to prove Theorems~\ref{main1} and~\ref{main2}.
The key ingredient is the Chern--Simons invariant, which we briefly review in
Section~\ref{sssdd}.
In Section~\ref{sec3a884}, we prove Theorem~\ref{main2}, and in Section~\ref{hyp11} we also discuss the hyperbolic case.


\subsection{Review of $\SU(2)$ Cheeger--Chern--Simons cocycles}
\label{sssdd}

We briefly recall the Chern--Simons transgression and the corresponding
Cheeger--Chern--Simons (CCS) 3-cocycles; see \cite{CS,Dup2,DK,DZ}.
Chern--Simons theory and its invariants form a broad subject, and a general account would require a long discussion.
Therefore, in this subsection we record only the facts about the Cheeger--Chern--Simons cocycle, a 3-cocycle $c_P^{\mathrm{CS}}: G^4 \ra \C/\Z$ that will be used later.
Here, \(G=\SL_2(\C)\) or
\(G=\SU(2)\).

First, consider the invariant polynomial corresponding to the second Chern class:
\[P: \g \times \g \longrightarrow \C; \qquad (A,B) \mapsto \frac{1}{8\pi^2}\Tr(AB).\]
Let $\omega_{\rm MC} \in \Omega^1(G;\g)$ be the Maurer--Cartan form. Define
\begin{equation}\label{mc3}
\lambda_{\rm MC}:=\frac{-1}{24\pi^2}\Tr(\omega_{\rm MC}\wedge\omega_{\rm MC}\wedge\omega_{\rm MC}) \in \Omega^3(G)^G
 \end{equation} 
as a $G$-invariant $3$-form on $G$.
The form $\lambda_{\rm MC}$ will recur below. 
If $G=\SU(2)$, then the image of $c_P^{\mathrm{CS}}$ is $\R/\Z$, and under the diffeomorphism $\SU(2)\cong S^3$, $\lambda_{\rm MC}$ is equal to the 
normalized bi-invariant volume form
$v_{S^3}$ with $\int_G v_{S^3}=1$. 
The following definition and fact will be used repeatedly.
\begin{defn}\label{tsukau4}
Let 
\(G=\SU(2)\).
Since $G$ is $2$-connected and $\pi_3(G) \cong \Z$, Proposition~\ref{312} with $K=\{e\}$ and $q=3$ gives a filling $\sigma_G$ on the homogeneous
sequence $S_n=G^{n+1}$.
From Theorem \ref{tsukau} (or Proposition \ref{baas}), take the resulting locally smooth 3-cocycle $\Phi: G^4\to \R/\Z$, such that the 3-class of the derivative is $[(\lambda_{\rm MC})_e]$. 

Hereafter, we denote by $c_P^{\mathrm{CS}}$ the 3-cocycle $\Phi$.
\end{defn}
\begin{thm}\label{tsukau}
Then, the 3-cocycle 
represents a CCS 3-class $[c_P^{\mathrm{CS}}]$ associated with the second Chern class.

Furthermore, for $m\ge2$, consider the injective (rotation) homomorphism
\begin{equation}\label{jmjm}
R_m:\Z/m\longrightarrow \SU(2),\qquad
a\mapsto
\begin{pmatrix}
\exp(2\pi a\sqrt{-1}/m) & 0\\
0 & \exp(-2\pi a\sqrt{-1}/m)
\end{pmatrix},
\end{equation}
Then there exists a generator $\tau_m$ of $H_3^{\rm gr}(\Z/m;\Z)\cong \Z/m$ for every $m \geq 2$ such that
the compatibility relation $(R_m)_*(\tau_m)= k (R_{km})_*(\tau_{km})$ and  
\begin{equation}\label{eq:CCS-cyclic-evaluation}
\bigl\langle [c_P^{\mathrm{CS}}],(R_m)_*\tau_m\bigr\rangle=[1/m]\in \Q/\Z. 
\end{equation}
\end{thm}

The construction of $[c_P^{\mathrm{CS}}]$ goes back to \cite{Dup2,DK,DZ} and is related to the Rogers dilogarithm and the (extended) Bloch groups, where $ c_P^{\mathrm{CS}}$ can be extended to an $\SL_2(\C)$-Chern--Simons 3-class. The proof appears, e.g., in \cite[Section 1.3]{DZ}; we only use the facts stated above.
The reader should keep in mind the rotation homomorphism $R_m $ defined above, as it will reappear later.

\subsection{Lemmas on transfer and CCS cocycle comparison}\label{sec3a4}

We will use Lemma~\ref{321a} on transfer in group cohomology, and 
a comparison lemma about the Cheeger--Chern--Simons 3-cocycle several times.
For completeness, we recall them here.
Let $\Gamma$ be a group, let $G\subset\Gamma$ be a normal subgroup of finite
index, and let $A$ be either $\R$ or $\R/\Z$, regarded as a trivial
$\Gamma$-module.

We review the transfer (corestriction) on homogeneous cochains from \cite[Chapter 4]{Bro}, and prove Lemma~\ref{321a}.
Choose a set $T\subset \Gamma$ of representatives for the \emph{right} cosets $G\backslash\Gamma$;
thus $\Gamma=\bigsqcup_{t\in T}Gt$ and we may assume $e\in T$.
For $t\in T$ and $\gamma\in \Gamma$ let $\overline{t\gamma}\in T$ be the unique representative of the right coset $G(t\gamma)$.
For $\phi \in C^n_{\Delta}(G;A)$, we define a cochain $\mathrm{cor}^{\Gamma}_G(\phi)\in C^n_{\Delta}(\Gamma;A)$ by
\begin{equation}\label{eq:transfer-def}
\mathrm{cor}^{\Gamma}_G(\phi)(\gamma_0,\ldots,\gamma_n)
:=\sum_{t\in T}\phi\bigl((t\gamma_0) \overline{t\gamma_0}^{-1}, (t\gamma_1 ) \overline{t\gamma_1}^{-1} , \ldots, (t\gamma_n ) \overline{t\gamma_n }^{-1} \bigr).
\end{equation}
The map $\mathrm{cor}^{\Gamma}_G:\ C^*_{\Delta}(G;A)^{G}\ra C^*_{\Delta}(\Gamma;A)^{\Gamma}$ is a cochain map.
If the cocycle $\phi$ is invariant under the conjugation action of $\Gamma/G$, we have $j^* ([ \mathrm{cor}^{\Gamma}_G(\phi)])=|\Gamma/G|\,[\phi]$ by the definition of transfer.
Dually, we can define the transfer $\mathrm{tr}^{\Gamma}_G: H_n^{\rm gr}(\Gamma) \ra  H_n^{\rm gr}(G)$ as a homomorphism on homology; see \cite{Bro}. 
We next prove Lemma~\ref{321a}.

\begin{lem}\label{321a}
Let 
$G\triangleleft\Gamma$ be a subgroup of finite index.
Suppose that $\phi\in Z^n(G;A)^G$ is a homogeneous 
group $n$-cocycle.
\begin{enumerate}[(i)]

\item 
Suppose that $\Gamma$ is 
a (Fr\'echet) Lie group, and $G$ is an open
normal subgroup. If $ \phi$ is locally smooth, then so is $ \mathrm{cor}^{\Gamma}_G(\phi)\in Z^n_{\rm loc,sm}(\Gamma;A )^\Gamma $.
\item Let $n$ be odd, and let $A=\R/\Z$. 
For each $m$, choose a compatible generator
$\tau_m\in H_n^{\rm gr}(\Z/m;\Z)\cong\Z/m$ in the standard direct-limit system. 
Fix 
a nonzero integer
$c_G\in\Z$ such that, for every $m\in\N$, there exists a group homomorphism
$\iota_m:\Z/m\to G$ compatible with the standard direct-limit system, namely,
$\iota_{km}([r])^k=\iota_m([r])$ for all $r$, and
\[\langle   \iota_m^*[\phi] ,  \tau_m \rangle=[c_G/m]\in \Q/\Z
\]
is satisfied. 
Then $H_n^{\rm gr}(G;\Z)$ contains a subgroup
$A_G\cong\Q/\Z$.

\item If, in addition, the induced action of $\Gamma/G$ on $A_G$ is trivial,
then $H_n^{\rm gr}(\Gamma;\Z)$ contains a subgroup isomorphic to $\Q/\Z$.

In particular, when $n=3$, if the action of $\Gamma/G$ on $ \mathrm{Im}(\iota_m)$ is generated by
the automorphism $ \iota_m(a) \mapsto \iota_m(a^{-1})  $, 
then the action of $\Gamma/G$ on $A_G$ is trivial, and $\Q/\Z \hookrightarrow H_3^{\rm gr}(\Gamma;\Z)$. 
\end{enumerate}
\end{lem}

\begin{proof}
\noindent\textbf{(i)}
Since \(\phi\) is locally smooth and \(T\) is finite, we may choose an
identity neighbourhood \(U\subset G\) so small that \(\phi\) is
smooth on \((tUt^{-1})^{n+1}\) for every \(t\in T\).  Since \(G\) is
open in \(\Gamma\), \(U\) is also an identity neighbourhood in
\(\Gamma\).  For \(\gamma_0,\ldots,\gamma_n\in U\), we have
\(\overline{t\gamma_j}=t\). Hence, by the definition of $ \operatorname{cor}^{\Gamma}_G$,
$ \operatorname{cor}^{\Gamma}_G$
is smooth on \(U^{n+1}\), being a finite sum of smooth maps.

\noindent\textbf{(ii)}
Consider the evaluation map ${\rm ev}:=\langle[\phi],-\rangle:\Tor H_n^{\rm gr}(G ;\Z)\to \Tor(\R/\Z)=\Q/\Z$.
The compatibility of the homomorphisms \(\iota_m\) gives a homomorphism
\(\alpha:\Q/\Z=\varinjlim_m H_n^{\rm gr}(\Z/m;\Z)\to
\Tor H_n^{\rm gr}(G;\Z)\).  For the class represented by \(s\tau_m\), the
composite \({\rm ev}\circ\alpha\) takes the value \([c_Gs/m]\).  Hence
\({\rm ev}\circ\alpha\) is multiplication by \(c_G\) on \(\Q/\Z\).  Its kernel
is the finite subgroup \((\Q/\Z)[c_G]\cong\Z/c_G\Z\), and its image is all of
\(\Q/\Z\).  Therefore \(\Ker\alpha\) is finite, and
\(\alpha(\Q/\Z)\cong(\Q/\Z)/\Ker\alpha\cong\Q/\Z\).  This gives the required
subgroup \(\Q/\Z\hookrightarrow H_n^{\rm gr}(G;\Z)\).

\noindent\textbf{(iii)} Let $j : G \hookrightarrow \Gamma$ be the inclusion, and let $ j_*^A $ be the restriction of $j_*: H_n^{\rm gr}(G;\Z) \ra H_n^{\rm gr}(\Gamma;\Z) $ to $A_G$. 
By assumption on the trivial action on $A_G$ and the definition of $\mathrm{res}^{\Gamma}_G$, we have $\mathrm{res}^{\Gamma}_G \circ j_*|_{A_G}
=[\Gamma:G]\operatorname{id}_{A_G}.$ 
In particular, the kernel of $j_*^A$ is finite, and $\ker(j_*|_{A_G})\subset |[\Gamma:G]A_G $.
Since $(\Q/ \Z)/\Ker(j_*^A ) $ is isomorphic to $\Q/\Z$, the image of $j_*^A$ gives the required subgroup. 
\end{proof}

\begin{rem}\label{rem:transfer-no-lift}
The classes detected by Lemma~\ref{321a}\textup{(ii)} are genuinely
\(\R/\Z\)-valued.  Indeed, if one of them were cohomologous to the reduction
modulo \(\Z\) of an \(\R\)-valued \(n\)-cocycle, then its restriction to
\(\Z/m\) would lie in the image of \(H^n(\Z/m;\R)\rightarrow H^n(\Z/m;\R/\Z)\).
This image is zero for \(n>0\), since \(H^n(\Z/m;\R)=0\).  This contradicts
the nonzero cyclic evaluations supplied by Lemma~\ref{321a}\textup{(ii)}
for suitable \(m\).  Thus the torsion-detection argument essentially uses
\(\R/\Z\)-coefficients.
\end{rem}

We shall also use the following comparison lemma repeatedly below.
\begin{lem}[Cheeger--Chern--Simons comparison]
\label{lem:comparison-CCS}
Assume that \((M,\lambda)\) satisfies \((\dagger)_3\), and put
\(\GG_M=\Diff_{\lambda,0}(M)\).
Let
\(\widetilde\lambda\in\AAA^3(\GG_M)^{\GG_M}\)
be closed and satisfy the period hypothesis of
Proposition~\ref{baas} with \(\Lambda=\Z\).
Let
\(\Phi\in Z^3_{\rm loc,sm}(\GG_M;\R/\Z)\)
be any cocycle supplied by that proposition.

Suppose that a smooth group homomorphism
\(L:\SU(2)\to\GG_M\) satisfies
\[
 L^*\widetilde\lambda
 =
 d\,\lambda_{\rm MC}
\]
for some \(d\in\Z\).
Then, in the locally smooth cohomology of degree 3, the following equality holds: 
\begin{equation}\label{hphp}
 L^*[\Phi]
 =
 d\nu_M^3[c_P^{\rm CS}]
 \quad\text{in}\quad
 H^3_{\rm loc,sm}(\SU(2);\R/\Z).
\end{equation}
The same identity holds after passage to
\(H^3_{\rm gr}(\SU(2);\R/\Z)\).
Moreover, for every \(m\geq2\),
\begin{equation}\label{eq:CCS-comparison-cyclic}
 \left\langle
 (L\circ R_m)^*[\Phi],\tau_m
 \right\rangle
 =
 [d\nu_M^3/m]\in\Q/\Z.
\end{equation}
In addition, if $d \neq0$, then there exists an injective homomorphism $ \Q/\Z \hookrightarrow H_3^{\rm gr} ( \GG_M )$.
\end{lem}

\begin{proof}
Let \(D_{\GG_M}\) and \(D_{\SU(2)}\) denote the degree-three
differentiation maps for the indicated groups.
By Proposition~\ref{baas}, the identity germ of \(\Phi\) equals $
 \nu_M^3\mathcal F_\sigma(\widetilde\lambda).$
Proposition~\ref{43333} therefore gives the differentiation of $[\Phi] $ is given by 
$
 D_{\GG_M}([\Phi])
 =
 \nu_M^3[(\widetilde\lambda)_e].
$
Naturality of differentiation with respect to the smooth
homomorphism \(L\) yields
\[
 D_{\SU(2)}\bigl(L^*[\Phi]\bigr)
 =
 (dL_e)^*D_{\GG_M}([\Phi])\\
 =
 \nu_M^3[(L^*\widetilde\lambda)_e]=
 d\nu_M^3[(\lambda_{\rm MC})_e].
\]
On the other hand, by Proposition~\ref{43333}, since the local restriction of 
$c_P^{\rm CS}$ equals $\mathcal{F}_{\sigma}(\lambda_{\rm MC} )$, the definition of $c_P^{\rm CS}$ gives
the equality $
 D_{\SU(2)}\bigl([c_P^{\rm CS}]\bigr)
 =
 [(\lambda_{\rm MC})_e].
$
Consequently,
\[
 D_{\SU(2)}
 \bigl(
 L^*[\Phi]-d\nu_M^3[c_P^{\rm CS}]
 \bigr)=0 \in 
H^3_{\rm GF}(\mathfrak{su}(2);\mathbb R)
\]

Under the standard homogeneous--inhomogeneous identification,
\(D_{\SU(2)}\) is the differentiation map of $\SU(2)$. 
Since \(\SU(2)\) is \(2\)-connected, that map is injective (see \cite[Remark~V.14]{WW}).
Thus \eqref{hphp} follows.

Applying the forgetful homomorphism from locally smooth to ordinary
group cohomology gives the corresponding identity in
\(H^3_{\rm gr}(\SU(2);\R/\Z)\).
Finally, for the homomorphism $
 R_m:\mathbb Z/m\rightarrow \SU(2)$
from \eqref{jmjm}, and for the generator
\(\tau_m\in H_3^{\rm gr}(\mathbb Z/m;\mathbb Z)\) used in
Theorem~\ref{tsukau}, one has
\[
 \left\langle
   (L\circ R_m)^*[\Phi],
   \tau_m
 \right\rangle
 =
d \nu_M^3 
 \left\langle
    R_m^*[c_P^{\rm CS}],
   \tau_m
 \right\rangle
 =
 [\, d  \nu_M^3 /m\,]\in\mathbb Q/\mathbb Z .
\]
In addition, suppose $d \neq 0$. 
The compatibility $R_{km}([a])^k=R_m([a])$ is immediate, so the hypothesis of Lemma~\ref{321a}\textup{(ii)} is satisfied. Therefore we obtain an injective homomorphism $ \Q/\Z \hookrightarrow H_3^{\rm gr} ( \GG_M )$.
\end{proof}



\subsection{Proof of Theorem~\ref{main2}}\label{sec3a884}
We prove Theorem~\ref{main2}.
We begin by recalling some facts about volume-preserving diffeomorphism groups of spherical space forms.
Throughout this section, $\Gamma$ denotes one of the finite subgroups of $\SU(2)$ listed in Theorem~\ref{main2}, and we set $M=\SU(2)/\Gamma$. Here, $\SU(2)$ is embedded in $\SO(4)$ by right multiplication on $S^3$.
Let $v$ 
be the quotient of the standard $\SO(4)$-invariant volume form $v_{S^3}$ on $S^3$, satisfying the normalization $\int_{S^3} v_{S^3} =1$. 
Thus the low-dimensional homotopy groups needed below may be read off from the compact Lie group $\Isom^+(M)$.
\begin{fact}[{see, e.g., \cite{BK,HKMR}}]\label{aaaas}
Let $\Gamma$ be one of the finite subgroups of $\SU(2)$ listed in Theorem~\ref{main2} with $\Gamma \neq 1$, and
let $M=S^3/\Gamma$ be the resulting $3$-manifold. 

Then
the action of ${\rm Isom}^+(M)$ on $M$ is transitive. 
If $\Gamma $ is either $D_{4n}$ or the tetrahedral $T_{24}$, then
${\rm Isom}^+(M) \cong \mathrm{O}(3)$
and otherwise ${\rm Isom}^+(M) \cong \mathrm{SO}(3)$.
In particular, 
$\pi_0(\Isom^+(M))$ is either trivial or $ \Z/2$, $\pi_1(\Isom^+_0(M))\cong \Z/2$, $\pi_2(\Isom^+(M))=0$, and $\pi_3 (\Isom^+(M))\cong \Z$.
 \end{fact}

We also discuss the assumption $(\dagger)_q$ by recalling some classical facts and the Smale conjecture.
It is a classical fact that $\Diff(M)$ is a Fr\'echet--Lie group modeled on the Fr\'echet space
$\mathfrak{X}(M)$ of smooth vector fields; see, e.g., \cite{KrieglMichor}.
The subgroup $\Diff_v(M):=\{f\in\Diff(M)\mid f^*v=v\}$ is a Fr\'echet--Lie subgroup.
Consider the map
\begin{equation*}
  \Diff^+(M)\longrightarrow
 \Bigl\{w\in\AAA^{\dim M}(M)\ \Bigm|\ w_p >0 \textrm{ for every } p\in M
, \ \int_M w=\int_M v\Bigr\},
 \qquad f \mapsto 
f^*(v).
\end{equation*}
It is known that this map is a Serre fibration and that its target is convex (hence
contractible); see, for example, \cite{EM}.
Consequently, the inclusion $\Diff_v(M)\hookrightarrow \Diff^+(M)$ is a homotopy equivalence. Combining Ebin--Marsden's fibration with Bamler--Kleiner's solution of the generalized Smale conjecture \cite{BK}, the inclusion
$ \Isom^+(M)\hookrightarrow \Diff_v(M) $
is a homotopy equivalence. 
By Fact~\ref{aaaas}, $\pi_0(\Diff_v(M))$ and $\pi_1(\Diff_{v,0}(M))$ are finite, and $\pi_2(\Diff_{v,0}(M))=0$; equivalently, $(M,v)$ satisfies the assumption $(\dagger)_3$.

Next, we define a $3$-form $\tilde{\lambda}_M$ on $\Diff_v(M)$.
Fix $a_0 \in M$ and consider the evaluation map
\begin{equation}\label{8884}
 {\rm ev}_{a_0} : \Diff_v(M) \longrightarrow M, \qquad f\mapsto f(a_0).
 \end{equation}
We set $ \tilde{\lambda}_M := |\Gamma|\,{\rm ev}_{a_0}^*(v)$. Then $\tilde{\lambda}_M$ is a smooth $3$-form on $\Diff_v(M)$ invariant under left translations.
We start the proof of Theorem~\ref{main2}.
In the proofs below, we often implicitly compose with the comparison map $H_{\rm loc,sm}^3 \ra H_{\rm gr}^3$ when evaluating pairings.
\begin{proof}[Proof of Theorem~\ref{main2}]
For each spherical space form $M=S^3/\Gamma$, let $\GG_M:=\Diff_{v,0}(M)$. 
Choose an identity neighborhood $U\subset \GG_M$ and the associated sequence of \emph{$U$-small tuples}
$S_{n,\GG_M,U}\subset \GG_M^{n+1}$ together with the filling $\sigma$ of Section~\ref{sec223344}.

We verify the period condition in Proposition~\ref{baas}.
Let \(\alpha\in\pi_3(\GG_M)\), and choose a smooth based representative
\(g_\alpha:(S^3,s_0)\rightarrow(\GG_M,e)\).
Let \(p_\Gamma:S^3\to M=S^3/\Gamma\) be the quotient map.  Since
\(p_\Gamma^*v=v_{S^3}\), \(\deg(p_\Gamma)=|\Gamma|\), and
\(\int_{S^3}v_{S^3}=1\), we have
\(\int_Mv=1/|\Gamma|\).  Hence
\[
 \begin{aligned}
 \int_{S^3}g_\alpha^*\widetilde\lambda_M
 &=
 |\Gamma|
 \int_{S^3}({\rm ev}_{a_0}\circ g_\alpha)^*v  \\
 &=
 |\Gamma|\,
 \deg({\rm ev}_{a_0}\circ g_\alpha)\int_Mv \\
 &=
 \deg({\rm ev}_{a_0}\circ g_\alpha)
 \in\mathbb Z.
 \end{aligned}
\]
Thus the period condition in Proposition~\ref{baas} holds.

Let $
i_M:\SO(3)\subset \Isom^+(M)\hookrightarrow \Diff_v(M)$ be the inclusion, and let $q:\SU(2)\ra\SO(3)$ be the covering. 
By Fact~\ref{aaaas}, one has $
\pi_3(\GG_M)\cong \pi_3(\Diff_v(M))\cong \pi_3(\SO(3))\cong \Z,$
and the class $[i_M \circ q]$ generates $\pi_3(\GG_M)$.
Here $\nu_M=|\pi_1(\GG_M)|=2$. 
Then Proposition~\ref{baas} applies with $\Lambda=\Z$ and yields $\Phi_M\in Z^3_{\rm loc,sm}(\GG_M;\R/\Z)$.
Moreover, the evaluation map associated with $i_M\circ q$ is the quotient map $S^3\to S^3/\Gamma$; hence Lemma~\ref{lem:comparison-CCS} applies with $d=|\Gamma|$.
Thus the pullback $(i_M\circ q)^*[\Phi_M]$ is cohomologous to the Chern--Simons 3-class $8|\Gamma|[c_P^{\rm CS}]$. 
Hence we have the required inclusion $\Q/\Z\subset H_3^{\rm gr}(\Diff_{v,0}(M))$.


It remains to pass from $\Diff_{v,0}(M)$ to $\Diff_v(M)$. If $\Gamma$ is either $D_{4n}$ or $T_{24}$,
the nontrivial component acts on the cyclic subgroup $ i_M  \circ q\circ R_m(\Z/m)\subset\SO(3)$ by inversion, and this automorphism acts trivially on $H_3^{\rm gr}(\Z/m) \cong \Z/m$,
since in general the homomorphism $\Z/m \ra \Z/m$ sending $x$ to $x^{n}$ induces 
multiplication by $n^2$ on $H_3^{\rm gr} (\Z/m)  $. 
Hence the detected class in $H_3^{\rm gr}( \Diff_{v,0}(M) )$ is stable under the conjugation action of $ \pi_0( \Diff_v(M))$. 
On the other hand, 
if $\Gamma$ is neither $D_{4n}$ nor $T_{24}$, then $ \Isom^+(M)$ and $\Diff_v(M ) $ are both connected by Fact~\ref{aaaas}.
In summary, Lemma~\ref{321a}\textup{(iii)} applied to
$
G=\Diff_{v,0}(M)\triangleleft\Diff_v(M),$ and $\phi=\Phi_M,$
implies an inclusion $
\Q/\Z\subset H_3^{\rm gr}(\Diff_v(M))$.
This finishes the proof.
\end{proof}

\subsection{Proof of Theorem~\ref{main1}}\label{sec3a8842}

Identify $S^3$ with $\SU(2)$, let $v=\lambda_{\rm MC}$ be the
normalized bi-invariant volume form, and put
$G_v:=\Diff_v(S^3)$ and $\mathfrak g_v:=\mathfrak X_v(S^3)$.
The Ebin--Marsden fibration and the Smale conjecture~\cite{EM,HKMR}
imply that $G_v$ is connected and that the standard inclusion
$\SO(4)\hookrightarrow G_v$ is a homotopy equivalence.  Hence
$\pi_1(G_v)\cong\mathbb Z/2$, $\pi_2(G_v)=0$, and
$\pi_3(G_v)\cong\mathbb Z^2$.

Let $L,R:\SU(2)\to G_v$ be given by $L(a)(x)=ax$ and
$R(a)(x)=xa^{-1}$.  We first construct a second closed left-invariant
$3$-form on $G_v$.

\begin{lem}\label{lem:framing-cocycle-S3}
There exists a continuous Lie algebra $3$-cocycle $\eta_{\rm fr}$ on
$\mathfrak g_v$ such that
\[
 L^*\eta_{\rm fr}=0,
 \qquad
 R^*\eta_{\rm fr}=\lambda_{\rm MC}.
\]
\end{lem}

\begin{proof}
Let $\kappa$ be the left Maurer--Cartan form on $S^3=\SU(2)$.
For $X\in\mathfrak X(S^3)$, define
$\vartheta(X)\in C^\infty(S^3,\operatorname{End}(\mathfrak{su}(2)))$
by $\mathcal L_X\kappa=-\vartheta(X)\kappa$.
This is the crossed homomorphism associated with the left-invariant
trivialization of $TS^3$.  By
Billig--Neeb~\cite[Definition~II.7]{BN},
\[
 \varphi_2(X_1,X_2,X_3)
 :=\sum_{\sigma\in\mathfrak S_3}\!\operatorname{sgn}(\sigma)\,
 \operatorname{tr}\!\bigl(
 \vartheta(X_{\sigma(1)})
 \vartheta(X_{\sigma(2)})
 \vartheta(X_{\sigma(3)})
 \bigr)
\]
is a continuous $C^\infty(S^3)$-valued Lie algebra $3$-cocycle.
For $X\in\mathfrak g_v$ and $f\in C^\infty(S^3)$, one has
$\int_{S^3}X(f)v=0$.  Thus integration against $v$ is a
$\mathfrak g_v$-module homomorphism to the trivial module $\mathbb R$,
and
\[
 \eta_0(X_1,X_2,X_3)
 :=\int_{S^3}\varphi_2(X_1,X_2,X_3)\,v
\]
is a continuous real-valued Lie algebra $3$-cocycle on $\mathfrak g_v$.

The left action preserves $\kappa$, so $L^*\eta_0=0$.
For the right action, $R(a)^*\kappa=\operatorname{Ad}_a\kappa$, and hence
$\vartheta(dR(A))=-\operatorname{ad}_A$.  Cyclicity of the trace gives
\[
 (R^*\eta_0)(A,B,C)
 =-3 \,\operatorname{tr}\!\bigl(
 \operatorname{ad}_A\operatorname{ad}_{[B,C]}\bigr).
\]
Using
$\operatorname{tr}(\operatorname{ad}_A\operatorname{ad}_D)
 =4\operatorname{Tr}(AD)$ and
$\lambda_{\rm MC}(A,B,C)=-(8\pi^2)^{-1}\operatorname{Tr}(A[B,C])$,
we obtain $R^*\eta_0= 96\pi^2\lambda_{\rm MC}$.
Thus $\eta_{\rm fr}:=(96\pi^2)^{-1}\eta_0$ has the required
restrictions.
\end{proof}

The same symbol $\eta_{\rm fr}$ denotes the smooth closed
left-invariant $3$-form on $G_v$ determined by this cocycle.
Let $q_e:G_v\to S^3$ be evaluation at the identity, and put
$\widetilde\lambda_v:=q_e^*v$.
Since $q_e\circ L=\operatorname{id}_{\SU(2)}$ and
$q_e\circ R=\operatorname{inv}$, one has
$L^*\widetilde\lambda_v=\lambda_{\rm MC}$ and
$R^*\widetilde\lambda_v=-\lambda_{\rm MC}$.

The covering $\SU(2)\times\SU(2)\to\SO(4)$ and the homotopy
equivalence $\SO(4)\simeq G_v$ show that $[L]$ and $[R]$ form a basis
of $\pi_3(G_v)$.  The periods of $\widetilde\lambda_v$ and
$\eta_{\rm fr}$ on this basis are $(1,-1)$ and $(0,1)$, respectively.
The periods of both forms on the chosen basis of $\pi_3(G_v)$ are integral. 

Since $\nu_{S^3}=|\pi_1(G_v)|=2$, Proposition~\ref{baas}, applied with
$q=3$ and $\Lambda=\mathbb Z$, gives locally smooth homogeneous group
$3$-cocycles
$\Phi_v,\Phi_{ \rm fr}\in Z^3_{\rm loc,sm}(G_v;\mathbb R/\mathbb Z)$
whose identity germs are $8\mathcal F_\sigma(\widetilde\lambda_v)$ and
$8\mathcal F_\sigma(\eta_{ \rm fr})$, respectively.

Let $\gamma=[c_P^{\rm CS}]\in
H^3_{\rm loc,sm}(\SU(2);\mathbb R/\mathbb Z)$ be the primitive
Cheeger--Chern--Simons class, normalized as in
Theorem~\ref{tsukau}, so that
$\langle R_m^*\gamma,\tau_m\rangle=[1/m]$ for the compatible cyclic
system fixed there.  Lemma~\ref{lem:comparison-CCS} gives
\begin{equation}\label{eq:S3-two-restrictions}
 \begin{aligned}
 L^*[\Phi_v]&=8\gamma,
 &\qquad R^*[\Phi_v]&=-8\gamma,\\
 L^*[\Phi_{\rm fr}]&=0,
 & R^*[\Phi_{\rm fr}]&=8\gamma.
 \end{aligned}
\end{equation}

The compatible systems $L\circ R_m$ and $R\circ R_m$ induce
homomorphisms
$\theta_L,\theta_R:\mathbb Q/\mathbb Z\to
H_3^{\rm gr}(G_v;\mathbb Z)$.  Define
\[
 \Theta_v:(\mathbb Q/\mathbb Z)^2
 \longrightarrow H_3^{\rm gr}(G_v;\mathbb Z),
 \qquad
 \Theta_v(x,y):=\theta_L(x)+\theta_R(y).
\]

\begin{proof}[Proof of Theorem~\ref{main1}]
When taking Kronecker pairings, we implicitly apply the comparison map
from locally smooth to ordinary group cohomology.  By
\eqref{eq:S3-two-restrictions}, pairing with $[\Phi_v]$ and
$[\Phi_v +2\Phi_{\rm fr}]$ sends $\Theta_v(x,y)$ to
$\bigl(8(x-y),8(x+y)\bigr)$. 
This endomorphism of $(\mathbb Q/\mathbb Z)^2$ has determinant $128$
and finite kernel.  Since $\ker\Theta_v$ is contained in that kernel,
it is finite.  Therefore
\[
 \operatorname{Im}\Theta_v
 \cong(\mathbb Q/\mathbb Z)^2/\ker\Theta_v
 \cong(\mathbb Q/\mathbb Z)^2.
\]
Consequently,
$H_3^{\rm gr}(\Diff_v(S^3);\mathbb Z)$ contains a subgroup isomorphic
to $(\mathbb Q/\mathbb Z)^2$.
This proves Theorem~\ref{main1}.
\end{proof}
The precise source of the two detected
\(\mathbb Q/\mathbb Z\)-factors is the restriction calculation
\eqref{eq:S3-two-restrictions}: the two finite-cyclic evaluations give
an integer pairing matrix of nonzero determinant.  The proof does not
require an identification of the two cocycles with transgressions of
the Euler and Pontryagin classes, which lie in $H^4(BSO(4);\mathbb{Z}) \cong \mathbb{Z} \oplus \mathbb{Z}$.


Next, for the contact structure on $S^3$, we obtain a corollary.
Here, $\alpha_{\rm st}$ is the standard contact 1-form on $S^3$,
that coincides with a multiple of the Maurer-Cartan form $\omega_{\rm MC}$.
In particular, $\alpha_{\mathrm{st}}$ is strictly invariant under the left $SU(2)$-action.
: 
\begin{cor}\label{main1cor}
Let $M:=S^3$, and $\lambda:= \alpha_{\rm st}$. 
Then $H_3^{\rm gr}\bigl(\Diff_{\alpha_{\rm st}}(S^3)\bigr)$ contains a subgroup isomorphic to $\Q/\Z \oplus \Q/\Z$.
\end{cor}
\begin{proof}   
Let
$Q:=\Diff_{\alpha_{\rm st}}(S^3)$.
Since $ \alpha_{\rm st} \wedge d \alpha_{\rm st}$ is a multiple of $v$,
we have the inclusion $i:Q\hookrightarrow G_v$.
Since 
The left $\SU(2)$-action preserves $\alpha_{\rm st}$,
and $R\circ R_m$ lies in the Reeb circle.  Thus $R\circ R_m$ factors through $Q$.  Hence, the same compatible
systems induce a homomorphism
$\Theta_Q:(\mathbb Q/\mathbb Z)^2\to H_3^{\rm gr}(Q;\mathbb Z)$.
Naturality gives $i_*\circ\Theta_Q=\Theta_v$, and hence
$\ker\Theta_Q\subset\ker\Theta_v$.  The preceding argument
shows that $\ker\Theta_Q$ is finite.  Thus
$H_3^{\rm gr}(\Diff_{\alpha_{\rm st}}(S^3);\mathbb Z)$ contains a
subgroup isomorphic to $(\mathbb Q/\mathbb Z)^2$ as required.
\end{proof}

\subsection{The hyperbolic case: a construction and a question}\label{hyp11}
As a brief digression, since the present paper concerns $3$-dimensional geometry, it is worth commenting on the hyperbolic case.
We briefly explain how the same method yields group $3$-cocycles in the hyperbolic case.

Let $M$ be a closed hyperbolic $3$-manifold, and let $v$ be its hyperbolic volume form.
Fix a base point $a_0\in M$, and consider the pullback $ \tilde{\lambda}_M:={\rm ev}_{a_0}^*(v)$ under the evaluation map as in \eqref{8884}. 
Then $\tilde{\lambda}_M$ is a smooth left-invariant $3$-form on $\Diff_v(M)$.

Let us review some results.
By Mostow--Prasad rigidity, $\Isom^+(M)$ is finite.
Moreover, combining Ebin--Marsden's fibration with Gabai's theorem \cite{Gabai},
the inclusion $
\Isom^+(M)\hookrightarrow \Diff_v(M)$
is a homotopy equivalence.
Hence $
\Diff_{v,0}(M)$ is contractible, and $
\pi_0(\Diff_v(M))\cong \Isom^+(M)$
is a finite group.

Therefore, in degree $3$ there is no period obstruction for the construction of
Section~\ref{secDiff0Configured}.
Denote $\Diff_{v,0}(M)$ by $G$.
Choose an identity neighborhood $U\subset G$ and the corresponding sequence of
$U$-small tuples $S_{n,G,U}$ together with a filling $\sigma$ 
as in
Section~\ref{sec223344}.
Applying the construction of Section~\ref{secDiff0Configured} with $\Lambda=\{0\}$
to the form $\tilde{\lambda}_M|_{G}$ yields a locally smooth $\R$-valued group $3$-cocycle
$
\Phi_M\in Z^3_{\rm loc,sm}(G;\R).
$
Since $\pi_0(\Diff_v(M))\cong \Isom^+(M)$ is finite and the coefficient group is $\R$,
as in Section~\ref{sec3a4}, we obtain the transfer 
$\mathrm{cor}^{\Diff_v(M)}_G(\Phi_M)$, which is locally smooth by Lemma \ref{321a} (i) .
At present we do not know whether the 3-class $[\mathrm{cor}^{\Diff_v(M)}_G (\Phi_M)] $ is nonzero
in $ H^3_{\rm gr}(\Diff_v(M);\R)$.
We pose a question for future work:

\medskip\noindent
\textbf{Question.}
Is the transferred class of some such extension nonzero?
More concretely, does there exist a group-homology $3$-class %
$\kappa \in H_3^{\rm gr}(\Diff_v(M);\Z)$
such that $
\langle \mathrm{cor}(\Phi_M),\kappa\rangle\neq0$?
A positive answer would imply that $H_3^{\rm gr}(\Diff_v(M))$
contains a subgroup isomorphic to $\Z$.


\section{Configured $3$-cocycles on groups of symplectomorphisms}\label{sec32884}

Throughout, let $(M,\omega)$ be a closed $2n$-dimensional symplectic manifold, and let
$\Diff_{\omega}(M)$ be the group of symplectomorphisms of $(M,\omega)$, with identity
component $\Diff_{\omega,0}(M)$.
In Section~\ref{sec3ss2884}, we construct a natural
\(\Diff_{\omega}(M)\)-invariant Lie algebra \(3\)-cocycle and explain
how, under the hypotheses used in the applications,
Proposition~\ref{baas} produces a locally smooth group \(3\)-cocycle
on \(\Diff_{\omega,0}(M)\).  In
Section~\ref{sec3sdds2884}, we prove Theorem~\ref{main3}.

\subsection{Invariant $3$-forms on $\Diff_{\omega}(M)$}\label{sec3ss2884}

We recall the basic Hamiltonian formalism (see, e.g., \cite{Can}).
Given $f\in C^\infty(M)$, the corresponding Hamiltonian vector field $X_f\in\mathfrak{X}(M)$
is defined by $\iota_{X_f}\omega=-df$,
where $\iota$ denotes interior multiplication.
The Poisson bracket of $f,g\in C^\infty(M)$ is given by
$\{f,g\}:=\omega(X_f,X_g)=X_f(g)\in C^\infty(M)$.
With this bracket $C^\infty(M)$ becomes a (topological) Lie algebra.
Let $\R_{\rm const}$ be the subalgebra consisting of constant functions.
We set
\[
  \mathfrak{ham}(M,\omega)
  := \{ X_f \in \mathfrak{X}(M) \mid \iota_{X_f}\omega=-df,\ f\in C^\infty(M)\}
\]
for the Lie algebra of Hamiltonian vector fields.
Then the correspondence $
 f\ra X_f $
induces a Lie algebra isomorphism $C^\infty(M)/\R_{\rm const} \cong \mathfrak{ham}(M,\omega)$.

Next, we recall the known exact sequence \eqref{fffo} and prove Lemma~\ref{asax}. 
By Cartan's formula, $  L_X\omega = d(\iota_X\omega)$
for every $X\in\mathfrak{X}(M)$.
For $X\in\mathfrak{X}(M)$ with $ L_X\omega=0 $, put
$  \kappa  (X) := [\iota_X\omega] \in H^1(M;\mathbb{R}).
$
Then
\begin{equation}\label{fffo}
  0 \longrightarrow
  \mathfrak{ham}(M,\omega)
  \longrightarrow
  \{ X\in\mathfrak{X}(M) \mid L_X\omega=0\}
  \stackrel{\kappa}{\longrightarrow}
  H^1(M;\mathbb{R})
  \longrightarrow 0
\end{equation}
is exact.
We next record an elementary identity, which is standard.
\begin{lem}\label{asax}
For $f,g\in C^\infty(M)$, define $\langle f,g\rangle_M:=\int_M fg\,\omega^n$.
Then for all $f,g,h\in C^\infty(M)$,
\begin{equation}\label{eq:ad-invariance-lemma}
  \langle \{f,g\},h\rangle_M
  + \langle g,\{f,h\}\rangle_M
  = 0.
\end{equation}
\end{lem}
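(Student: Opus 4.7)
The plan is to reduce the identity to the statement that the Hamiltonian flow of $f$ preserves the Liouville volume form, and then apply Stokes' theorem on the closed manifold $M$.

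First I would rewrite both terms using $\{f,g\} = X_f(g)$ and $\{f,h\} = X_f(h)$, which gives
\[
 \langle \{f,g\},h\rangle_M + \langle g,\{f,h\}\rangle_M
 = \int_M \bigl( X_f(g)\cdot h + g\cdot X_f(h)\bigr)\,\omega^n
 = \int_M X_f(gh)\,\omega^n,
\]
by the Leibniz rule applied pointwise. Hence the claim is reduced to showing that $\int_M X_f(gh)\,\omega^n = 0$ for every smooth function $gh$ on $M$.

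Next I would observe that $X_f$ is symplectic, i.e.\ $L_{X_f}\omega = 0$: indeed, Cartan's formula combined with $\iota_{X_f}\omega = -df$ gives $L_{X_f}\omega = d\iota_{X_f}\omega + \iota_{X_f}d\omega = d(-df) + 0 = 0$. Consequently $L_{X_f}\omega^n = 0$, and therefore
\[
 L_{X_f}(gh\,\omega^n) = L_{X_f}(gh)\,\omega^n + gh\,L_{X_f}\omega^n = X_f(gh)\,\omega^n.
\]
Since $gh\,\omega^n$ is a form of top degree on $M$, we have $d(gh\,\omega^n)=0$, and Cartan's formula yields $L_{X_f}(gh\,\omega^n) = d\,\iota_{X_f}(gh\,\omega^n)$. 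As $M$ is closed, Stokes' theorem gives $\int_M d\,\iota_{X_f}(gh\,\omega^n) = 0$, which is the required vanishing.

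There is essentially no serious obstacle: the computation is standard, and the only point requiring care is confirming that the Hamiltonian vector field $X_f$ preserves $\omega^n$, which is immediate from $L_{X_f}\omega=0$. No boundary terms appear because $M$ is closed, and no regularity issues arise because $f,g,h$ are smooth and $M$ is compact. I would therefore present the argument as a short three-line chain: Leibniz rule, volume preservation by $X_f$, and Stokes' theorem on a closed manifold.
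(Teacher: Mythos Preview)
Your proof is correct and follows essentially the same route as the paper's: reduce via the Leibniz rule to $\int_M X_f(gh)\,\omega^n$, use $L_{X_f}\omega^n=0$ to identify this integrand with $L_{X_f}(gh\,\omega^n)=d\,\iota_{X_f}(gh\,\omega^n)$, and conclude by Stokes' theorem on the closed manifold. The only difference is cosmetic ordering and that you spell out $L_{X_f}\omega=0$ explicitly where the paper simply invokes Liouville's theorem.
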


\begin{proof}
Liouville's theorem gives $L_{X_f}\omega^n=0$ for every $f$.
Hence, for any $\varphi\in C^\infty(M)$,
\[
  L_{X_f}(\varphi\,\omega^n )
   = X_f(\varphi)\,\omega^n  + \varphi\,L_{X_f}\omega^n
   = X_f(\varphi)\,\omega^n.
\]
By Stokes' theorem and the compactness of $M$, we have
\begin{equation}\label{dddd}
 \int_M X_f(\varphi)\,\omega^n
   = \int_M L_{X_f}(\varphi\,\omega^n )
   = \int_M d\bigl(\iota_{X_f}(\varphi\,\omega^n )\bigr)
   = 0.
\end{equation}
We prove \eqref{eq:ad-invariance-lemma} by computing
\[
\begin{aligned}
  \langle \{f,g\},h\rangle_M
  + \langle g,\{f,h\}\rangle_M
  &= \int_M \{f,g\}\,h\,\omega^n
     + \int_M g\,\{f,h\}\,\omega^n  \\
  &= \int_M \bigl(X_f(g)\,h + g\,X_f(h)\bigr)\,\omega^n = \int_M X_f(gh)\,\omega^n = 0,
\end{aligned}
\]
where we used \eqref{dddd} with $\varphi=gh$.
\end{proof}

In general, for a quadratic Lie algebra $(\mathfrak{g},[\ ,\ ],\langle\ ,\ \rangle)$, one defines the
alternating trilinear form
\begin{equation}\label{phiphi}
  \Phi(x,y,z) := \langle x,[y,z]\rangle,\qquad x,y,z\in\mathfrak{g}.\end{equation}
It is well known that $\Phi\in C^3_{\rm GF}(\mathfrak g;\R)$ is a Lie algebra
$3$-cocycle, called the {\it Cartan $3$-cocycle}; see, e.g., \cite[\S 4]{FSS}.
In particular, when $\g$ is a finite-dimensional simple Lie algebra and
$\langle\ ,\ \rangle$ is the Killing form $B$, $H^3_{\rm GF}(\g;\R)\cong\R$ is generated by
the class of $\Phi$.

We adapt this construction to the symplectic situation.
\begin{defn}\label{def:betaM}
Let $(M,\omega)$ be a closed symplectic manifold of dimension $2n$.
Put
\[
C_0^\infty(M):=
\{f\in C^\infty(M)\ \mid \
\int_M f\,\omega^n=0 \}.
\]
Every Hamiltonian vector field has a unique Hamiltonian in
$C_0^\infty(M)$, and the map $f\mapsto X_f$ is a Lie algebra
isomorphism $C_0^\infty(M)\cong\mathfrak{ham}(M,\omega)$.
For normalized Hamiltonians $f,g,h$, define
\[
\beta_M(X_f,X_g,X_h)
:=A_n\int_M f\{g,h\}\,\omega^n,
\qquad
A_n:=\frac{(n+1)(n+2)}{2^{n+3}\pi^{n+2}}.
\]
The coefficient $A_n$
is chosen so that the restriction to the standard \(\SU(2)\)-subgroup of
\(\mathrm{PSU}(n+1)\) agrees with \(\lambda_{\rm MC}\); see
Lemma~\ref{ddddd}.
\end{defn}

By construction, $\beta_M$ is smooth and invariant under the diagonal action of
$\Diff_{\omega}(M)$ on $\mathfrak{ham}(M,\omega)$, and Lemma~\ref{asax} and \eqref{phiphi} show that
$\beta_M$ is a Lie algebra $3$-cocycle. Hence $\beta_M$ defines a cohomology class $ [\beta_M]\in H^3_{\rm GF}(\mathfrak{ham}(M,\omega);\R). $
In particular, if $H^1(M;\R)=0$, then \eqref{fffo} yields the identification
$ \mathfrak{ham}(M,\omega)
 =  \{ X\in\mathfrak{X}(M) \mid L_X\omega=0\},$
the full Lie algebra of symplectic vector fields. In this case we may regard $\beta_M$ as a left-invariant closed $3$-form on the symplectic diffeomorphism group $\Diff_{\omega}(M)$.

\subsection{Proof of Theorem~\ref{main3}}\label{sec3sdds2884}

We specialize to projective spaces and prove Theorem~\ref{main3}.

We first prove Lemma~\ref{ddddd}. Let $M=\C P^n$ and let $\omega_n$ be the Fubini--Study symplectic form with normalization $\int_{\C P^1}\omega_n =2\pi $.
Since the action of ${\rm PSU}(n+1)$ on $\C P^n$ preserves $\omega_n$, we have an inclusion $i_n: {\rm PSU}(n+1)\hookrightarrow \Diff_{\omega_n}(\C P^n)$.
We begin by understanding the restriction of $\beta_M$ to ${\rm PSU}(n+1) $.


\begin{lem}\label{ddddd}
Let \(\mathbb CP^n\) be equipped with the Fubini--Study symplectic form
\(\omega_n\), normalized by \(\int_{\mathbb CP^1}\omega_n=2\pi\).  Let
\(i_n:\mathrm{PSU}(n+1)\hookrightarrow \Diff_{\omega_n}(\mathbb CP^n)\)
be the standard inclusion, let
\(j_n:\mathfrak{su}(2)\hookrightarrow\mathfrak{psu}(n+1)\) be the canonical
upper-left block inclusion, and let
\(q:\SU(2)\to\SO(3)=\mathrm{PSU}(2)\) be the double covering.  

Then, under the identification
\(C^3_{\rm GF}(\mathfrak{su}(2);\mathbb R)
 \cong \Omega^3(\SU(2))^{\SU(2)}\),
the following equality
holds in \(C^3_{\rm GF}(\mathfrak{su}(2);\mathbb R)\):
\[
 (d i_n \circ j_n)^*(\beta_{\mathbb CP^n})
 =
 q^*i_1^*(\beta_{\mathbb CP^1})=\lambda_{\rm MC}. 
\]
Moreover, for \(M=\mathbb CP^1\times\mathbb CP^1\) with
\(\omega=\omega_1\oplus\omega_1\), and for either factor inclusion
\(\iota_k:\SO(3)\hookrightarrow\Diff_{\omega}(M)\), \(k=1,2\), one has
$q^*\iota_k^*(\beta_M)=4\,q^*i_1^*(\beta_{\mathbb CP^1})$. 


Moreover, 
the restriction \(i_n^*(\beta_{\mathbb CP^n})\) represents a nonzero class in
\(H^3_{\rm GF}(\mathfrak{psu}(n+1);\mathbb R)\).
\end{lem}
\begin{proof}
Let \(X,Y,Z\) be the standard basis of \(\mathfrak{su}(2)\), chosen with the
same sign convention as in \eqref{mc3}, and let \(x,y,z\) be the corresponding
moment Hamiltonians for the upper-left \(\SU(2)\)-action on \(\mathbb CP^n\).
Thus \(\{x,y\}=z\), \(\{y,z\}=x\), and \(\{z,x\}=y\).  Since
\((\Omega^3\mathfrak{su}(2)^*)^{\SU(2)}\) is one-dimensional, it is enough
to evaluate on \(X,Y,Z\).

Write \(\mu_i([Z_0:\cdots:Z_n])=|Z_i|^2/\sum_{r: 0 \leq r \leq n}|Z_r|^2\).  We may take
\(x=(\mu_0-\mu_1)/2\).  The standard Dirichlet integral for the normalized
Fubini--Study measure gives
\[
 \int_{\mathbb CP^n}x^2\,\omega_n^n
 =
 (2\pi)^n\frac{1}{2(n+1)(n+2)}
 =
 \frac{2^{n-1}\pi^n}{(n+1)(n+2)}.
\]
Therefore
\[
( d i_n \circ  j_n)^*(\beta_{\mathbb CP^n})(X,Y,Z)
 =
 A_n\int_{\mathbb CP^n}x\{y,z\}\,\omega_n^n
 =
 \frac{1}{16\pi^2}.
\]
By the normalization of \(\lambda_{\rm MC}\), we also have
\(\lambda_{\rm MC}(X,Y,Z)=1/(16\pi^2)\).  Hence
\(( d i_n \circ  j_n)^*(\beta_{\mathbb CP^n})=\lambda_{\rm MC}\).  Taking \(n=1\)
gives \(q^*i_1^*(\beta_{\mathbb CP^1})=\lambda_{\rm MC}\), and therefore
the first equality follows.

For the product case, let \(x,y,z\) be pulled back from the \(k\)-th factor.
Since \((\omega_1\oplus\omega_1)^2
=2\,\mathrm{pr}_1^*\omega_1\wedge\mathrm{pr}_2^*\omega_1\) and
\(\int_{\mathbb CP^1}\omega_1=2\pi\), we get
\[
 \int_{\mathbb CP^1\times\mathbb CP^1}
 x\{y,z\}\,(\omega_1\oplus\omega_1)^2
 =
 4\pi\int_{\mathbb CP^1}x\{y,z\}\,\omega_1.
\]
Using \(A_1=3/(8\pi^3)\) and \(A_2=3/(8\pi^4)\), this gives
\(q^*\iota_k^*(\beta_M)=4\,q^*i_1^*(\beta_{\mathbb CP^1})\).

Finally, \( (d i_n)^*(\beta_{\mathbb CP^n})\) is nonzero in cohomology, because
its pullback along \(j_n\) is \(\lambda_{\rm MC}\), whose class is nonzero in
\(H^3_{\rm GF}(\mathfrak{su}(2);\mathbb R)\).
\end{proof}

We recall two facts about symplectomorphism groups.
First, when $M=\C P^n$ and $n=1,2$, the inclusion $
 {\rm PSU}(n+1)\subset \Diff_{\omega_n}(\C P^n)$
is a homotopy equivalence: for $n=1$ this is the classical Smale conjecture, and for
$n=2$ it is due to Gromov \cite{Gro}. 
Since for $n>2$ no analogous result is presently known, we use only the cases \(n=1,2\). Here we only record that $\pi_1({\rm PSU}(n+1))\cong \Z/(n+1) $.

Second, for the product 
$(\C P^1 \times\C P^1 ,\omega_1 \oplus\omega_1)$ there is a homotopy equivalence
\begin{equation}\label{homotopy1}
\bigl(\SO(3)\times \SO(3)\bigr)\rtimes \mathbb{Z}/2 
 \;\simeq\; \Diff_{\omega_1 \oplus\omega_1}(S^2\times S^2) \cong \Diff_{\omega_1 \oplus\omega_1,0}(S^2\times S^2) \rtimes \Z/2
\end{equation}
(see \cite{AM}). Here, the left-hand homotopy equivalence is induced by the inclusion.
\begin{proof}[Proof of Theorem~\ref{main3}] 
Let $M$ be one of $\C P^1$, $\C P^2$, or $\C P^1\times \C P^1$.
Put $\Gamma_M=\Diff_{\omega}(M)$ and $\GG_M=\Diff_{\omega,0}(M)$.
In all three cases $\pi_0(\Gamma_M)$ is finite, $\pi_2(\GG_M)=0$, and $\nu_M:=|\pi_1(\GG_M)|<\infty$; hence $(M,\omega)$ satisfies $(\dagger)_3$.
Fix an identity neighborhood $U\subset \GG_M$ and a filling $\sigma$ supplied by Proposition~\ref{prop:local-affine-filling}.

\smallskip\noindent
\textbf{Case 1: $M=\C P^n$ with $n=1,2$.} 
Consider the composite 
\begin{equation}\label{gggg} 
\SU(2)\subset \SU(n+1) \twoheadrightarrow {\rm PSU}(n+1)\subset \Diff_{\omega_n}(\C P^n),
\qquad n=1,2.
\end{equation}
Here the first map is the canonical upper-left block inclusion; denote the composite by $\iota_n$.
Then $[\iota_n]$ is a generator of $\pi_3(\GG_M)\cong \Z$ up to sign, by the homotopy equivalence above and Bott periodicity.
By Lemma~\ref{ddddd}, $\iota_n^*(\beta_{\C P^n})=\lambda_{\rm MC}$; hence the period condition in Proposition~\ref{baas} holds with $\lambda=\beta_{\C P^n}$ and $\Lambda=\Z$.
Let $\Phi_M$ be the locally smooth cocycle obtained from Proposition~\ref{baas}. Lemma~\ref{lem:comparison-CCS} applies to $\iota_n$ with $d=1$.
Thus we obtain an injection $\Q/\Z\hookrightarrow H_3^{\rm gr}(\GG_M)$; since $\Gamma_M$ is connected in this case, this proves the assertion for $\C P^n$.

\smallskip\noindent
\textbf{Case 2: $M=\C P^1\times \C P^1$ and $\omega=\omega_1\oplus\omega_1$.}
Let $
\tilde\iota_1,\tilde\iota_2:\SU(2)\ra \GG_M
$
be the composites of the double covering $q:\SU(2)\to\SO(3)$ with the two factor inclusions
\[
\SO(3)\hookrightarrow \SO(3)\times \SO(3)\subset \GG_M=\Diff_{\omega,0}(M).
\]
Then the classes $[\tilde\iota_1],[\tilde\iota_2]$ form a basis of $\pi_3(\GG_M)\cong \Z\oplus \Z$.
Moreover, by Lemma~\ref{ddddd}, for $k\in\{1,2\}$ one has
$\tilde\iota_k^*(\beta_M)=4\,q^*i_1^*(\beta_{\C P^1})\in C_{\rm GF}^3(\mathfrak{su}(2);\R)$.
Set $\lambda:=\beta_M/4$. Then the periods on the basis $[\tilde\iota_1],[\tilde\iota_2]$ are integral, so Proposition~\ref{baas} applies with $\Lambda=\Z$.
Let $\tilde\iota_\Delta(a):=\tilde\iota_1(a)\tilde\iota_2(a)$. Then $\tilde\iota_\Delta^*\lambda=2\lambda_{\rm MC}$: after dividing $\beta_M$ by $4$, each factor contributes $\lambda_{\rm MC}$, while the mixed terms vanish because the moment Hamiltonians have zero mean on the other factor. Hence Lemma~\ref{lem:comparison-CCS} applies with $d=2$ and $\nu_M=4$.
Thus we obtain an injection $\Q/\Z\hookrightarrow H_3^{\rm gr}(\GG_M)$.

Next, we check the hypothesis of the transfer lemma, Lemma~\ref{321a}\textup{(iii)}.
The factor-swap sends $\tilde\iota_1$ to $\tilde\iota_2$ and fixes the diagonal class $(\tilde\iota_\Delta\circ R_m)_*(\tau_m)$; hence Lemma~\ref{321a}\textup{(iii)}
implies
$\Q/\Z\subset H_3^{\rm gr}( \Diff_{ \omega_1 \oplus \omega_1} (M))$.
\end{proof}

As a consequence of Case~1 in the proof of Theorem~\ref{main3},
we obtain the following extension of the Cheeger--Chern--Simons $3$-cocycle.
\begin{cor}\label{asaaa}
Let $M=\C P^n$ with the Fubini--Study symplectic form $\omega_n$, and assume $n=1$ or $n=2$.
Let $q_n: {\rm SU}(n+1)\ra {\rm PSU}(n+1) $ be the projection. 
Then there exists a locally smooth 3-cocycle $
\Phi_M\in Z^3_{\rm loc,sm}(\Diff_{\omega_n}(M);\R/\Z)$ such that the pullback along the composite map $i_n \circ q_n : {\rm SU}(n+1)\ra \Diff_{\omega_n}(M) $
is cohomologous to the $(n+1)^3$-multiple of the CCS 3-class. 
\end{cor}
\begin{proof}   
Let \(c_{\rm CCS}\) denote the primitive second
Cheeger--Chern--Simons class on \(\SU(n+1)\). The two classes being
compared have the same differentiated invariant \(3\)-form: their
restrictions to the upper-left \(\mathfrak{su}(2)\) agree, and
\(H^3_{\rm GF}(\mathfrak{su}(n+1);\mathbb R)\) is one-dimensional.
Since \(\SU(n+1)\) is \(2\)-connected, differentiation in degree
three is injective. Hence, the two locally smooth classes are equal.
\end{proof}
\begin{rem}
Reznikov~\cite[\S6]{Rez2} states related Chern--Simons extension
results for \(S^2\), \(\mathbb CP^2\), and 
\(S^2\times S^2\) with some symplectic areas.  The argument above
is independent and gives locally smooth cocycle representatives with
explicit finite-cyclic evaluations.  Its conclusion is the
homological embedding of Theorem~\ref{main3}. 
We do not claim that the two characters coincide or that our
character satisfies the \(K_3(\mathbb C)\)-compatibility considered
there.
\end{rem}




\subsection{Applications to the Quillen plus construction of $\Diff$}
\label{sec323d}

From the viewpoint of algebraic \(K\)-groups and related topics, it is natural to ask whether the discussion above can be lifted to the third homotopy group of Quillen's plus construction.

We first recall Quillen's plus construction (see, e.g., \cite{Wei}).  Let \(\Gamma\) be a group and let
\(P\lhd \Gamma\) be a perfect normal subgroup.  Let \(B\Gamma\) be an
Eilenberg--MacLane space of type \((\Gamma,1)\).  Then there exists a connected
CW-complex \(B\Gamma^+_P\), called the Quillen plus construction, together
with a map \(\eta:B\Gamma\to B\Gamma^+_P\), such that
\(\pi_1(B\Gamma^+_P)\cong \Gamma/P\), and, for every local coefficient system
\(\mathcal L\) on \(B\Gamma^+_P\), the pushforward
\(\eta_*:H_*(B\Gamma;\eta^*\mathcal L)\rightarrow H_*(B\Gamma^+_P;\mathcal L)\)
is an isomorphism.  In the context of algebraic \(K\)-groups, the study of
the homotopy groups of \(B\Gamma^+_P\) is of fundamental importance.

Let \(G\) be one of the diffeomorphism groups appearing in
Theorems~\ref{main1}--\ref{main3}, and let \(G_0\) be its identity
component.  In the plus construction below, \(G\) and \(G_0\) are regarded as
discrete groups.  The subgroup \(G_0\) is normal in \(G\).

For the symplectic cases, \(H^1(M;\mathbb R)=0\), so the symplectic flux
homomorphism vanishes on the identity component.  For the volume-preserving
spherical space form cases, \(H^2(M;\mathbb R)=0\), so the volume flux
homomorphism vanishes.  In both cases, Banyaga's simplicity theorem
\cite[Theorems 4.3.1 and 5.1.3]{Ban} implies that \(G_0\) is perfect.
Thus we can define the plus construction \(BG^+_{G_0}:=B(G^\delta)^+_{G_0^\delta}\).
\begin{thm}\label{thm6}
Let $G$ be one of the diffeomorphism groups appearing in Theorems~\ref{main1}--\ref{main3}, and let $G_0$ be its identity component.
Then
the third homotopy group $\pi_3( BG^+_{G_0})$ contains a subgroup isomorphic to $\Q/\Z$. 
\end{thm}
The proof is based on the following proposition.
Indeed, in the proofs of Theorems~\ref{main1}--\ref{main3},
we constructed the required cocycles and compatible cyclic subgroups
satisfying Proposition~\ref{af8}. Here, if $G$ is not connected, we use the transferred 3-cocycle $\mathrm{cor}^{G}_{G_0}(\Phi)$. 

\begin{prop}\label{af8}
Let $P \lhd \Gamma$ be a perfect normal subgroup together with a homomorphism $\rho:\SU(2)\ra P$.
Fix direct-limit compatible generators $\{ \tau_m \in H_3^{\rm gr}(\Z/m ;\Z)\}_{m \geq 2}$. 
Suppose there exist a nonzero $c_\Gamma \in \N$ and an $\R/\Z$-valued $3$-cocycle $\phi$ of $ \Gamma $ such that 
the rotation homomorphism $R_m : \Z/m \ra  \SU(2)$ in \eqref{jmjm} satisfies 
\[
\bigl\langle \phi|_P,(\rho\circ R_m)_*\tau_m\bigr\rangle
=[c_\Gamma/m]\in\Q/\Z\qquad(m\geq2).
\]
Then the third homotopy group $\pi_3(B\Gamma ^+_P)$ contains a subgroup isomorphic to $\Q/\Z$, and 
there is a homomorphism
$
 \mathcal R_\phi:
 \pi_3(B\Gamma_P^+)\ra \R/\Z
$
whose image contains a subgroup isomorphic to $\Q/\Z\subset\R/\Z$. 
\end{prop}
\begin{proof}
Let $q:\widehat P\to P$ be the universal central extension of $P$.
Since the isomorphism $\kappa: \pi_3(B\Gamma_P^+)\cong H_3^{\rm gr}(\widehat P)$ 
is known (see \cite[Exercises~IV.1.8--IV.1.9]{Wei}),
we will construct an injection $\Q/\Z\hookrightarrow H_3^{\rm gr}(\widehat P)$.

Put $S:=\SU(2)$ and $C := H_2^{\rm gr}(S;\Z)$ for simplicity,
and let $p_S: \widehat S \ra S $ be the universal central extension with kernel $C$.   
By assumption, 
the homomorphisms $\{ R_m\}_{m \geq 2}$ induce a homomorphism $\iota: \Q/\Z\to S$.
Here
$C \cong H_2^{\rm gr}(S;\Z)$ is divisible; see \cite{DPS}.  The pullback
of this extension along $\iota$ has class in $H^2_{\rm gr}(\Q/\Z ;C)$.
Since $C$ is divisible, it is injective as a $\Z$-module, and since
$\Q/\Z  $ is locally cyclic, $ H_2^{\rm gr}(\Q/\Z )=\bigwedge^2(\Q/\Z) =0$.  The universal coefficient theorem immediately implies $H^2_{\rm gr}(\Q/\Z;C)=0$
because of $ \Ext^1_{\Z}(\Q/\Z ;C)=0$.
Hence we have a lift $\widehat\iota: \Q/\Z  \ra  \widehat S$ of $\iota$.

Functoriality of universal central extensions gives
$\widehat\rho:\widehat S\to\widehat P$ with
$q\circ\widehat\rho=\rho\circ p_S$.  
Let $k_m:  \Z/m \hookrightarrow \Q/\Z$ be the injection sending $a$ to $a/m$, and 
$\iota_m :=\widehat\rho\circ\widehat\iota \circ k_m :\Z/m \to\widehat P$. Then 
\[
 \langle   q^* (\phi |_P) ,  ( \iota_m)_* (\tau_m)\rangle =  \langle \phi |_P,  ( q \circ \widehat\rho\circ\widehat\iota \circ k_m )_* (\tau_m)\rangle  
=  
\langle \phi|_P,  (\rho \circ R_m)_* (\tau_m)\rangle  =[c_\Gamma/m]. \] 
By applying Lemma~\ref{321a}\textup{(ii)} to $G= \widehat{P}$ and $ \mathcal{F} = q^* ( \phi |_P)$, we obtain an inclusion $\Q/\Z$ into $H_3^{\rm gr}( \widehat{P};\Z)$, as required.
Finally, the composite of $\kappa$ and the evaluation $ \langle [q^* \phi |_P], \bullet\rangle :
H_3^{\rm gr}(\widehat P ) \ra \R/\Z$ gives the required homomorphism $\mathcal{R}_{\phi}$. 
\end{proof}

\begin{rem}[Relation to Reznikov's regulator]
Cheeger--Chern--Simons cocycles for groups such as \(\SL_n(\mathbb C)\) and
\(\SU(n)\) are closely related to regulator classes in algebraic
\(K\)-theory; see, for example, \cite{DK,DZ,DPS}.  In this sense, the
cocycles considered by Reznikov \cite{Rez1,Rez2} and those in the present paper may be
viewed as Chern--Simons-type \(3\)-cocycles.  

Reznikov~\cite[Theorem~2.2]{Rez1} formulated a regulator from the plus-construction homotopy group $\pi_3( B \Diff_{\omega_2}(M)^+_P) \ra \R/\Z$ with $M=\C P^2 $, together with a compatibility statement involving the Beilinson--Karoubi regulator on
$K_3(\mathbb C)$.  The character above is obtained independently
from the locally smooth group $3$-cocycle constructed in this paper,
and its nontriviality is detected directly on finite cyclic
subgroups.  We do not claim that
this character agrees with Reznikov's regulator, or that it satisfies
the \(K_3(\mathbb C)\)-compatibility considered in~\cite{Rez1}.
\end{rem}

\subsection{Locally smooth non-extension to \(\Diff^+(S^n)\)}\label{Diff+(S3)}
We study whether the locally smooth classes constructed on the
volume-preserving subgroup extend across the inclusion into the full
orientation-preserving diffeomorphism group. We prove the rank-two
non-extension statement for \(S^3\) below. The case of \(S^2\) requires
additional comparison results and is not used in the main theorems

Let \(G^+:=\Diff^+(S^n)\) be the diffeomorphism group of $S^n$ preserving orientation, and let \(\iota:G_v \hookrightarrow G^+\) be
the inclusion, where $v$ is the standard volume $n$-form.
We will show a proposition in the case $n=2$.
\begin{prop}\label{prop:rank-two-nonextension}
Let $n=3$. 
No nonzero integral linear combination of \([\Phi_v]\) and
\([\Phi_{\rm fr}]\) belongs to the image of
\[
 \iota^*:H^3_{\rm loc,sm}(G^+;\R/\Z)
 \longrightarrow H^3_{\rm loc,sm}(G_v;\R/\Z).
\]
\end{prop}

\begin{proof}
Billig--Neeb's formulation of Haefliger's theorem gives
\(H^3_c(\mathfrak{X}(S^3);\R)=0\)
\cite[Corollary~D.5]{BN}.  Hence, by naturality of differentiation, every
class in the image of \(\iota^*\) has zero differentiated class on
\(\mathfrak g_v\).  On the other hand, with the standard normalization,
\[
 D_{G_v}([\Phi_v])=8[\tilde{\lambda}_v],
 \qquad
 D_{G_v}([\Phi_{\rm fr}])=8[\eta_{\rm fr}].
\]
The two Lie algebra classes are linearly independent.  Indeed, their
restrictions to the left and right \(\mathfrak{su}(2)\)-subalgebras are
respectively
\((\lambda_{\rm MC},-\lambda_{\rm MC})\) and
\((0,\lambda_{\rm MC})\).  Therefore, no nonzero integral combination of the
two group classes can lie in the image of \(\iota^*\).
\end{proof}

\begin{rem}\label{rem:diffplus-discrete}
The preceding proposition is a statement about locally smooth cohomology.
It does not determine whether the homomorphism
\(\iota_*\Theta_v:(\Q/\Z)^2\to H_3^{\rm gr}(\Diff^+(S^3);\Z)\) has finite
kernel.  The inclusion \(G_v\hookrightarrow G^+\) is a homotopy equivalence
for the usual topologies, but this gives no injectivity statement for the
homology of the underlying discrete groups.  Thus the present argument does
not prove that \(H_3^{\rm gr}(\Diff^+(S^3);\Z)\) contains
\((\Q/\Z)^2\).
\end{rem}
\begin{rem}[Sharpness for \(S^2\)]
Next, let
\(\iota:\Diff_{\omega_1}(S^2)\to\Diff^+(S^2)\)
denote the inclusion.
Recall the locally smooth 3-cocycle $\Phi_{\mathbb CP^1}$ of $\Diff_{\omega_1}(S^2) $ in Corollary \ref{asaaa}
We will show that no nonzero integral multiple of
\([\Phi_{\mathbb CP^1}]\) lies in the image of
\[
 \iota^*:
 H^3_{\rm loc,sm}(\Diff^+(S^2);\R/\Z)
 \longrightarrow
 H^3_{\rm loc,sm}(\Diff_{\omega_1}(S^2);\R/\Z).
\]

By the coefficient exact sequence and the comparison results of
\cite{WW}, the nonvanishing of \(p_1\) in the continuous
cohomology of \(\Diff^+(S^2)\), proved in
\cite[Theorem~1.2 and diagram~\textup{(1.6)}]{Prigge},
implies that every locally continuous
\(\R/\Z\)-valued degree-three class on \(\Diff^+(S^2)\)
lifts to a globally continuous real class.
Its pullback to the compact rotation group vanishes by averaging.

The pullback of such a real class to the compact group \(\SU(2)\)
vanishes by averaging.
By \cite[Proposition~I.7 and
Remark~IV.16\textup{(3)}]{WW}, it follows that every locally
smooth \(\R/\Z\)-valued class on \(\Diff^+(S^2)\) has trivial pullback under
the standard rotation action of \(\SU(2)\).
On the other hand, 
the comparison above gives
\[
 L^*[\Phi_{\mathbb CP^1}]
 =
 N[c_P^{\rm CS}]
\]
with \(N\neq0\) \textup{(in the present normalization, \(N=8\))}.
Since \([c_P^{\rm CS}]\) has infinite order, no nonzero multiple
of \(\Phi_{\mathbb CP^1}\) can extend to \(G\).
\end{rem}

\appendix
\section{Proof of Proposition~\ref{43333}}\label{furoku}
We prove the proposition for the local filling from
Section~\ref{sec223344}.  Recall that this filling is defined using the
cutoff join
\[
 \mathrm{Join}_{\phi}(x,y;s)
 =x\,\phi^{-1}\bigl(\rho(s)\phi(x^{-1}y)\bigr),
 \qquad
 0\leq \rho\leq1,
 \quad \rho=0\ \text{near }0,
 \quad \rho=1\ \text{near }1.
\]
\begin{proof}[Proof of Proposition~\ref{43333}]

The local smoothness assertion follows immediately from the assumed
agreement with \(\mathcal F_\sigma(\tilde{\lambda})\).  After shrinking to a
connected neighborhood of \(e^{\times(i+1)}\), we may compute \(D\)
using the real-valued local lift
\[
 f(g_0,\ldots,g_i)
 :=
 \int_{\Delta^i}
 \sigma_i(g_0,\ldots,g_i)^*\tilde{\lambda}.
\]
By the assumed local identity, $cf$ is a real-valued local lift of
$\Phi$ near $e^{\times(i+1)}$.
Any two real local lifts differ by a locally constant
$\Lambda$-valued function, which is annihilated by $D$ because
$i\geq1$.  Hence, $
 D(\Phi)=D(cf)=cD(f).$
It remains to prove that
$D(f)=\tilde{\lambda}_e$.

Let \(\phi:U_0\to V_0\subset\mathfrak g\) be the chart used in the
construction, with
\(\phi(e)=0\) and \(d\phi_e=\operatorname{id}_{\mathfrak g}\).  In chart
coordinates put
\[
 J(a,b;s)
 :=
 \phi\!\left(
 \mathrm{Join}_{\phi}(\phi^{-1}(a),\phi^{-1}(b);s)
 \right).
\]
Since the differentials of inversion and multiplication at the identity
are \(-\operatorname{id}\) and addition, respectively,
\[
 d_{(a,b)}J_{(0,0;s)}(A,B)
 =
 (1-\rho(s))A+\rho(s)B.
\]
Consequently,
\[
 J(a,b;s)
 =
 (1-\rho(s))a+\rho(s)b+Q(a,b;s),
\]
where \(Q\), as well as \(\partial_s Q\), has vanishing first jet in
\((a,b)\) at \((0,0)\).  Thus, after composition with
finite-dimensional \(C^1\)-families of size \(O(|t|)\), \(Q=O(|t|^2)\).
Here \(|t|\) denotes the Euclidean norm on \(\mathbb R^i\), and $O$ is the Landau symbol.
Precisely all Landau estimates in the Fr\'echet chart are understood seminormwise:
for every continuous seminorm \(p\) on the model space and every fixed
finite-dimensional parameter family under consideration, there are
constants \(C_p\) and \(\varepsilon_p>0\) such that the stated estimate
holds for \(|t|<\varepsilon_p\).

Fix \(X_1,\ldots,X_i\in\mathfrak g\), and set
\[
 g_k(t)
 :=
 \exp(t_1X_1)\cdots\exp(t_kX_k),
 \qquad
 a_k(t):=\phi(g_k(t)).
\]
Then
\[
 a_k(t)
 =
 \sum_{j: 1 \leq j \leq k}t_jX_j+O(|t|^2).
\]
For \(0\leq n\leq i\), write
\[
 \beta_t^{(n)}
 :=
 \phi\circ
 \sigma_n(e,g_1(t),\ldots,g_n(t)),
 \qquad
 \beta_t^{(0)}=0.
\]

Let $\Psi_n(u,s)=((1-s)u,s)$.
For \(r\in[0,1]\), put
\(\rho_r(s)=(1-r)s+r\rho(s)\), and define recursively
\[
 R_{0,r}=\operatorname{id}_{\Delta^0},
 \qquad
 R_{n,r}(\Psi_n(u,s))
 =
 \Psi_n(R_{n-1,r}(u),\rho_r(s)).
\]
These are continuous maps of pairs
\((\Delta^n,\partial\Delta^n)\), with
\(R_{n,0}=\operatorname{id}_{\Delta^n}\).  
Here, for a simplex
\(\Delta^n\), this topology may be defined using any fixed finite atlas of manifolds with corners.
Set \(R_n:=R_{n,1}\).
In this sense, the cutoff conditions imply that \(R_n\) is smooth, and the preceding
homotopy shows that
\[
 (R_n)_*[\Delta^n,\partial\Delta^n]
 =
 [\Delta^n,\partial\Delta^n].
\]

For \(a=(a_1,\ldots,a_n)\), let
\[
 \ell_a(s_0,\ldots,s_n)
 :=
 \sum_{k: 1 \leq k \leq n}s_k a_k,
\]
and define
\[
 \alpha_t^{(n)}
 :=
 \ell_{(a_1(t),\ldots,a_n(t))}\circ R_n.
\]
Equivalently,
\[
 \alpha_t^{(n)}(\Psi_n(u,s))
 =
 (1-\rho(s))\alpha_t^{(n-1)}(u)+\rho(s)a_n(t).
\]
The recursive definition of \(\sigma_n\) and the first-jet formula for
\(J\) now give, by induction on \(n\),
\[
 \beta_t^{(n)}-\alpha_t^{(n)}
 =
 O(|t|^2)
 \quad\text{in }C^1(\Delta^n).
\]
Indeed, in cone coordinates the difference at level \(n\) is
\[
 (1-\rho(s))
 \bigl(\beta_t^{(n-1)}-\alpha_t^{(n-1)}\bigr)(u)
 +
 Q\bigl(\beta_t^{(n-1)}(u),a_n(t);s\bigr).
\]

Let
$
 \mu:= (\phi^{-1})^*\tilde{\lambda},
$ and $  \omega:= \mu_0$,%
where \(\omega\) is regarded as a constant \(i\)-form on
\(\mathfrak g\).  The maps \(\alpha_t^{(i)}\), \(\beta_t^{(i)}\), and
their first derivatives are \(O(|t|)\).  Since the smooth family
\(x\mapsto\mu_x-\omega\) vanishes at \(x=0\), the preceding
\(C^1\)-estimate and multilinearity imply
\[
 f(e,g_1(t),\ldots,g_i(t))
 =
 \int_{\Delta^i}
 (\alpha_t^{(i)})^*\omega
 +
 O(|t|^{i+1}).
\]

Now \(\alpha_t^{(i)}
=\ell_{(a_1(t),\ldots,a_i(t))}\circ R_i\), and \(R_i\) has relative
degree \(1\).  Hence
\[
 \int_{\Delta^i}(\alpha_t^{(i)})^*\omega
 =
 \int_{\Delta^i}
 \ell_{(a_1(t),\ldots,a_i(t))}^*\omega
 =
 \frac1{i!}
 \omega(a_1(t),\ldots,a_i(t)).
\]
By the expansion of the \(a_k(t)\) and the alternation of \(\omega\),
\[
 \omega(a_1(t),\ldots,a_i(t))
 =
 t_1\cdots t_i\,
 \omega(X_1,\ldots,X_i)
 +
 O(|t|^{i+1}).
\]
Therefore
\[
 f(e,g_1(t),\ldots,g_i(t))
 =
 \frac{ t_1\cdots t_i}{i!}
 \tilde{\lambda}_e(X_1,\ldots,X_i)
 +
 O(|t|^{i+1}).
\]
It follows that
$
 T^i f=\frac1{i!}\tilde{\lambda}_e.
$
Therefore, by the definition of $D$ and the alternation of
$\tilde{\lambda}_e$,
\[ D(f)(X_1,\ldots,X_i)
= \frac1{i!}
 \sum_{\pi\in\mathfrak S_i}
 \operatorname{sgn}(\pi)\,
 \tilde{\lambda}_e
 (X_{\pi(1)},\ldots,X_{\pi(i)})\\
 =
 \tilde{\lambda}_e(X_1,\ldots,X_i).\]
Combining this identity with
$D(\Phi)=cD(f)$ gives
$D(\Phi)=c\,\tilde{\lambda}_e$, as required.
\end{proof}

\small



\normalsize

\noindent
Department of Mathematics, Institute of Science Tokyo\\
2-12-1 Ookayama, Meguro-ku, Tokyo 152-8551, Japan

\end{document}